\renewcommand{\subsectionmark}[1]{} 
\newcommand{\mc}[1]{\mathcal{#1}}
\newcommand{\eps}{\varepsilon}
\DeclareMathOperator{\re}{Re}
\DeclareMathOperator{\im}{Im}
\DeclareMathOperator{\Span}{span}
\DeclareMathOperator{\diag}{diag}
\newcommand*{\C}{{\mathbb{C}}}     
\newcommand*{\R}{{\mathbb{R}}}     
\newcommand*{\N}{{\mathbb{N}}}
\newcommand*{\Lin}{{\mathcal{L}}}   
\newcommand*{\Dom}{{\mathcal{D}}}   
\newcommand{\ran}{{\mathcal{R}}}   
\renewcommand{\ker}{{\mathcal{N}}}
\newcommand*{\abs} [1]{\lvert#1\rvert}
\newcommand*{\norm}[1]{\lVert#1\rVert}
\newcommand*{\set} [1]{\{#1\}}
\newcommand*{\setm}[2]{\{\,#1\mid#2\,\}}   
\newcommand*{\iprod}[2]{\langle#1,#2\rangle}
\newcommand*{\Setm}[2]{\left\{\,#1\,\middle|\,#2\,\right\}}
\newcommand*{\Lp}[1][p]{L^{#1}}
\newcommand{\pmat}[1]{\begin{pmatrix}#1\end{pmatrix}}
\newcommand{\pmatsmall}[1]{\begin{psmallmatrix}#1\end{psmallmatrix}}
\newcommand*{\Abs}[2][default]{\ifthenelse{\equal{#1}{default}}{\left\lvert#2\right\rvert}{\ldelim{#1}{\lvert}#2\rdelim{#1}{\rvert}}}
\newcommand*{\Norm}[2][default]{\ifthenelse{\equal{#1}{default}}{\left\lVert#2\right\rVert}{\ldelim{#1}{\lVert}#2\rdelim{#1}{\rVert}}}
\newcommand*{\Iprod}[3][default]{\ifthenelse{\equal{#1}{default}}{\left\langle#2,#3\right\rangle}{\ldelim{#1}{\langle}#2,#3\rdelim{#1}{\rangle}}}
\newcommand*{\Dualpair}[3][default]{\ifthenelse{\equal{#1}{default}}{\left\langle#2,#3\right\rangle}{\ldelim{#1}{\langle}#2,#3\rdelim{#1}{\rangle}}}
\newcommand*{\List}[2][1]{\set{#1,\ldots,#2}}
\newcommand{\eq}[1]{\begin{align*}#1\end{align*}}
\newcommand{\eqn}[1]{\begin{align}#1\end{align}}
\newcommand{\CL}{C_\Lambda }
\newcommand{\CeL}{C_{e\Lambda} }
\newcommand{\KL}{K_\Lambda }
\newcommand{\KtL}{K_2^\Lambda }
\newcommand{\KtoL}{K_{21}^\Lambda }
\newcommand{\KtotL}{K_2^\Lambda }
\newcommand{\Sylspace}{\Dom(\CL)\cap \Dom(\KtoL) }
\newcommand{\JY}[1][k]{J_{#1}^Y}
\newcommand{\ZI}{Z_0}
\newcommand{\sysops}{(\tilde{A},\tilde{B},\tilde{C},\tilde{D},\tilde{E},\tilde{F})}
\newcommand{\gs}{\sigma}
\newcommand{\ga}{\alpha}
\newcommand{\gl}{\lambda}
\newcommand{\gw}{\omega}
\newcommand*{\conj}[1]{\overline{#1}}
\newcommand{\ieq}[1]{$#1$}
\newcommand{\inv}{^{-1}}
\newcommand*{\ddb}[2][1]{\ifthenelse{\equal{#1}{1}}{\frac{d}{d#2}}{\frac{d^{#1}}{d#2^{#1}}}}
\newcommand*{\pd}[3][1]{\ifthenelse{\equal{#1}{1}}{\frac{\partial{#2}}{\partial{#3}}}{\frac{\partial^{#1}{#2}}{\partial#3^{#1}}}}
\newcommand{\Gconds}{$\mc{G}$-con\-di\-tions}
\newcommand{\yref}{y_{\mbox{\scriptsize\textit{ref}}}}
\newcommand{\Ops}{\mc{O}}
\newcommand*{\pinv}{^{\dagger}}
\newcommand*{\keyterm}[1]{\emph{#1}}
\newtheorem{theorem}{Theorem}[section]
\newtheorem{lemma}[theorem]{Lemma}
\newtheorem{assumption}[theorem]{Assumption}
\theoremstyle{definition}
\newtheorem{definition}[theorem]{Definition} 
\newtheorem{remark}[theorem]{Remark} 
\newenvironment{RORP}{\medskip\noindent\textbf{The Robust Output Regulation Problem.}}{}
\title{Controller Design for Robust Output Regulation of Regular Linear Systems}
\author{Lassi Paunonen\thanks{Tampere University of Technology, PO.Box 553, 33101 Tampere, Finland, \texttt{lassi.paunonen@tut.fi}}}
\date{~}
\begin{document}

\maketitle
\vspace{-8ex}

\thispagestyle{plain}

\begin{abstract}
We present three dynamic error feedback controllers for robust output regulation of regular linear systems. These controllers are (i) a minimal order robust controller for exponentially stable systems (ii) an observer-based robust controller and (iii) a new internal model based robust controller structure. In addition, we present two controllers that are by construction robust with respect to predefined classes of perturbations. The results are illustrated with an example where we study robust output tracking of a sinusoidal reference signal for a two-dimensional heat equation with boundary control and observation.  
\end{abstract}

\textbf{Keywords:}
Robust output regulation, regular linear systems, controller design, feedback.

\section{Introduction}

The topic of this paper is the construction of controllers for robust output regulation of linear infinite-dimensional systems. The goal in this control problem is to design a control law for a linear system
\begin{subequations}
  \label{eq:plantintro}
  \eqn{
  \dot{x}(t)&= Ax(t)+Bu(t) + w(t), \qquad x(0)=x_0\in X\\
  y(t)& = Cx(t) + Du(t)
  \label{eq:plantintroout}
  }
\end{subequations}
in such a way that the output $y(t)$ converges asymptotically to a given reference signal $\yref(t)$ despite the external disturbance signal $w(t)$. 
In addition, the controller must tolerate small perturbations and uncertainties in the parameters $(A,B,C,D)$ of the plant~\eqref{eq:plantintro}. The robust output regulation problem was first studied for finite-dimensional systems in the 1970's most notably by Francis and Wonham~\cite{FraWon75a,FraWon76}, and Davison~\cite{Dav76}, and since then the theory of output regulation has been been actively developed for infinite-dimensional systems~\cite{Poh82,Sch83b,ByrLau00,HamPoh00,RebWei03,Imm07a}.

The most recent developments in the field 
are related to the study of output regulation and robust output regulation for infinite-dimensional systems with unbounded input and output operators, and
especially for \keyterm{regular linear systems}~\cite{Wei89b,Wei94,StaWei02} which are often encountered in the study of partial differential equations with boundary control and observation~\cite{ByrGil02}. 
In particular, the characterization of the solvability of the output regulation problem using the so-called \keyterm{regulator equations} was extended for systems with unbounded operators $B$ and $C$ in~\cite{PauPohCDC13,NatGil14}, and the \keyterm{internal model principle} of robust output regulation was generalized for regular linear systems in~\cite{PauPoh14a}.

In this paper we continue the work begun in~\cite{PauPoh14a}.  
The main emphasis in the reference~\cite{PauPoh14a} was on studying the properties of robust controllers and on characterizing the solvability of the robust output regulation problem. In this paper we concentrate on 
designing actual controllers that achieve robust output regulation for the regular linear system~\eqref{eq:plantintro}.
As our main results we present three 
different robust controllers.
Two of
these
controllers employ structures that are familiar from the control of systems with bounded operators $B$ and $C$, 
and 
the third
employs a completely new complementary controller structure.

The reference signal $\yref(\cdot)$ and the disturbance signal $w(\cdot)$ are assumed to be generated by an \keyterm{exosystem}%
\begin{subequations}
  \label{eq:exointro}
  \eqn{
  \dot{v}(t)&=Sv(t), \qquad v(0)=v_0\in W\\
  w(t)&=Ev(t)\\
  \yref(t)&=-Fv(t)
  }
\end{subequations}
on a finite-dimensional space $W=\C^r$. Here $S$ is a matrix with eigenvalues $\gs(S)=\set{i\gw_1,\ldots,i\gw_q}\subset i\R$.
The main objective in this paper is to
achieve robust output regulation for the
system~\eqref{eq:plantintro} by choosing appropriate 
parameters $(\mc{G}_1,\mc{G}_2,K)$ for the dynamic error feedback controller
\begin{subequations}
  \label{eq:controllerintro}
  \eqn{
  \dot{z}(t) &= \mc{G}_1z(t)+\mc{G}_2e(t), \qquad z(0)=z_0\in Z\\
  u(t) &= Kz(t).
  }
\end{subequations}
where $e(t)=y(t)-\yref(t)$ is the \keyterm{regulation error}.

The main tool in
constructing robust controllers
is the internal model principle, which provides a complete characterization of the controllers that achieve robust output regulation for the system~\eqref{eq:plantintro} and for the reference and disturbance signals generated by the exosystem~\eqref{eq:exointro}.
In particular, this fundamental result tells us that
the control problem
can be solved by including a suitable \keyterm{internal model} of the dynamics of the exosystem into the controller~\eqref{eq:controllerintro}, and by choosing the rest of the parameters of the controller in such a way that the closed-loop system consisting of the plant and the controller is stable. The classical definition of the internal model (also referred to as the \keyterm{p-copy internal model}) requires that if 
$p$
is the dimension of the output space $Y$ and  
if~$S$ has a Jordan block of dimension $n_k$ associated to an eigenvalue $i\gw_k$, then the operator~$\mc{G}_1$ must have at least $p$ independent Jordan chains of length greater or equal to $n_k$ associated to the same eigenvalue $i\gw_k$~\cite{FraWon75a,PauPoh10}. 
In this paper 
we also use an alternative definition for an internal model, 
called
the \keyterm{\Gconds}, which is applicable 
even if
$Y$ is infinite-dimensional~\cite{HamPoh10,PauPoh14a}.

The first controller in this paper presented in Section~\ref{sec:Mincontr} is constructed by choosing $\mc{G}_1$ as the internal model of the exosystem~\eqref{eq:exointro} and by stabilizing the closed-loop system with suitable choices of $\mc{G}_2$ and $K$. It is well-known that if the plant~\eqref{eq:plantintro} is exponentially stable and $S$ is diagonal, then this very simple structure is extremely effective. Indeed, this controller
structure has been successfully used on several occasions for infinite-dimensional systems with bounded and unbounded input and output operators~\cite{UkaIwa90,HamPoh96a,LogTow97,HamPoh00,RebWei03}.
The most important advantages of this controller structure is that due to the internal model principle, this controller is of minimal possible order, and if $\dim Y<\infty$ then the resulting controller is
finite-dimensional. 
In~\cite{Sai15phd} this type of structure was used for regular linear systems on Hilbert spaces and with $U=Y$. In this paper we present a minimal order controller that solves the robust output regulation for a regular linear system~\eqref{eq:plantintro} on a Banach space $X$, without restrictions on the input and output spaces, and with the most general choices for the stabilizing operators $\mc{G}_2$ and $K$.

In Section~\ref{sec:Mincontr} we in addition present a separate version of the minimal order controller for a situation where robustness is only required with respect to a predefined class $\Ops_0$ of perturbations.
The design is motivated by the recent observation~\cite{PauPoh13a,Pau15a} that in such a situation the
robust output regulation problem
may be solvable 
with a controller incorporating a \keyterm{reduced order internal model} that is strictly smaller than the full p-copy internal model.
In particular, in~\cite{Pau15a} such a controller was successfully designed for a given class $\Ops_0$ of perturbations. In this paper we present a new controller that solves the robust output regulation problem for a stable regular linear system and for a given class $\Ops_0$.
This new controller has the advantage over the one presented in~\cite{Pau15a} in that the controller is of minimal order, and it is finite-dimensional whenever $\dim Y<\infty$. This controller is new even for finite-dimensional linear systems and for infinite-dimensional systems with bounded operators $B$ and $C$.

The second robust controller of this paper presented in Section~\ref{sec:ContrOne} employs a novel structure that was introduced in~\cite{Pau15a} for construction of controllers with reduced order internal models.
In particular, the system operator $\mc{G}_1$  of the controller has a triangular structure that is naturally complementary to 
the structure of observer-based robust controllers~\cite{Dav76,HamPoh10}. The main advantages of this new controller are that it has the natural structure for the inclusion of the p-copy internal model into the dynamics of the controller, and that it can robustly regulate plants that have a larger number of inputs than outputs. The construction of this second controller is a new result even for finite-dimensional linear systems and for infinite-dimensional systems with bounded operators~$B$ and $C$. In Section~\ref{sec:ContrOne} we also use the same structure to generalize the original reduced order internal model based controller in~\cite{Pau15a} for regular linear systems.

Finally, the third robust controller
presented in Section~\ref{sec:ContrTwo} is an observer-based controller that employs the triangular structure that was successfully used
for robust output regulation of
systems with bounded~$B$ and~$C$ together with infinite-dimensional diagonal exosystems in~\cite{HamPoh10}. In this paper we generalize the observer-based construction in~\cite{HamPoh10} 
to regular linear systems with nondiagonal exosystems.

As the first main result in this paper we present the internal model principle. This result was first generalized for regular linear systems in~\cite{PauPoh14a} in the more general setting of infinite-dimensional exosystems and 
strong stability of the closed-loop system.
In this paper we introduce it for regular linear systems with finite-dimensional exosystems and exponential closed-loop stability. 
We demonstrate that the exponential closed-loop stability allows simplifying general assumptions of the theorem, and show that the regulation error has an exponential rate of decay.  

We illustrate the construction of controllers 
by considering the robust output regulation problem for a two-dimensional heat equation with boundary control and observation. We begin by stabilizing the system with negative output feedback, and we subsequently construct a minimal order 
controller that achieves 
robust
tracking of a sinusoidal reference signal. 

The paper is organized as follows. The standing assumptions on the plant, the exosystem and the controller are stated in Section~\ref{sec:plantexo}. In Section~\ref{sec:RORP} we formulate the robust output regulation problem and present the internal model principle. 
The minimal order controller for stable systems is presented in Section~\ref{sec:Mincontr}. 
In Sections~\ref{sec:ContrOne} we introduce the new controller structure for robust output regulation. Finally, the observer-based robust controller is constructed in~\ref{sec:ContrTwo}. The robust output tracking of the two-dimensional heat equation is considered in~\ref{sec:heatex}.

\section{The Plant, The Exosystem and The Controller}
\label{sec:plantexo}

If $X$ and $Z$ are Banach spaces and $A:X\rightarrow Z$ is a linear operator, we denote by $\Dom(A)$, $\ker(A)$ and $\ran(A)$ the domain, kernel and range of $A$, respectively. The space of bounded linear operators from $X$ to $Z$ is denoted by $\Lin(X,Z)$. If \mbox{$A:X\rightarrow X$,} then $\gs(A)$, $\gs_p(A)$ and $\rho(A)$ denote the spectrum, the point spectrum and the \mbox{resolvent} set of $A$, respectively. For $\gl\in\rho(A)$ the resolvent operator is given by \mbox{$R(\gl,A)=(\gl -A)^{-1}$}. The inner product on a Hilbert space is denoted by $\iprod{\cdot}{\cdot}$. For an operator $A: \Dom(A)\subset X\to Z_1\times\cdots \times Z_n$ we use the notation $A = (A_k)_{k=1}^n $, where $A_k : \Dom(A)\subset X\to Z_k$ for all $k\in \List{n}$, to signify that $Ax = (A_k x)_{k=1}^n$ for $x\in \Dom(A)$. On the other hand, for an operator $A\in \Lin(X_1\times \cdots\times X_n,Z)$ we use the notation $A=(A_1,\ldots,A_n)$, meaning that $Ax = \sum_{k=1}^n A_kx_k$ for all $x=(x_k)_{k=1}^n\in X_1\times \cdots\times X_n$.

We consider a linear system~\eqref{eq:plantintro} on a Banach space $X$ with state $x(t)\in X$, output $y(t)\in Y$, and input $u(t)\in U$. The spaces $U$ and $Y$ are Hilbert spaces.
The operator $A:\Dom(A)\subset X\rightarrow X$ generates a strongly continuous semigroup $T(t)$ on $X$. For a fixed
$\gl_0\in\rho(A)$
we define the scale spaces $X_1 = (\Dom(A), \norm{(\gl_0-A)\cdot})$ and $X_{-1}= \overline{(X,\norm{R(\gl_0,A)\cdot})}$ (the completion of $X$ with respect to the norm $\norm{R(\gl_0,A)\cdot}$)~\cite[Sec. II.5]{EngNag00book}.
The extension of $A$ to $X_{-1}$ is denoted by $A_{-1}: X\subset X_{-1}\rightarrow X_{-1}$.

Throughout the paper we assume that~\eqref{eq:plantintro} is a \keyterm{regular linear system}~\cite{Wei89b,Wei94,StaWei02,Sta05book}.
In particular,  $B\in \Lin(U,X_{-1})$ and $C\in \Lin(X_1,Y)$ are admissible with respect to $A$ and $D\in \Lin(U,Y)$. 
The operator $C$ 
in~\eqref{eq:plantintroout}
is replaced
with its 
$\Lambda$-extension
\eq{
\CL x = \lim_{\gl\to \infty}\, \gl C R(\gl,A)x
}
with $\Dom(\CL)$ consisting of those $x\in X$ for which the limit exists. If $C\in\Lin(X,Y)$, then $\CL=C$. 
For a regular linear system we have $\ran(R(\gl,A)B)\subset \Dom(\CL)$ for all $\gl\in\rho(A)$, and the transfer function of~\eqref{eq:plantintro} is~\cite[Sec. 4]{StaWei02}
\eq{
P(\gl) = \CL R(\gl,A)B + D \qquad \forall \gl\in\rho(A).
}
Finally, we define $X_B=\Dom(A) + \ran(R(\gl_0,A_{-1})B)\subset \Dom(\CL)$, which is independent of the choice of $\gl_0\in\rho(A)$.

\begin{assumption}
  \label{ass:stabdetect}
The pair $(A,B)$ is exponentially stabilizable 
and there exists $L\in\Lin(Y,X)$ such that $A+L\CL$ generates an exponentially stable semigroup.
  \end{assumption}
  The stabilizability of $(A,B)$ means that there exists
  $K\in \Lin(X_1,U)$ such that
  $(A,B,\KL)$ is a regular linear system for which $I$ is an admissible feedback operator, and $(A+B\KL)\vert_X$ generates an exponentially stable semigroup~\cite{WeiCur97}.

The exosystem~\eqref{eq:exointro} is a linear system on the finite-dimensional space 
$W=\C^r$ for some $r\in \N$, and $S\in \Lin(W)=\C^{r\times r}$,
 $E\in \Lin(W,X)$, and $F\in \Lin(W,Y)$.
  We assume the geometric multiplicity of each of the eigenvalues $\gs(S)= \set{i\gw_k}_{k=1}^q\subset i\R$ is equal to one.
We denote by $n_k\in\N$ the size of the Jordan block associated to $i\gw_k\in\gs(S)$. 
The following standing assumption is crucial for the solvability of the robust output regulation problem.
An immediate consequence of this assumption is that in order to achieve robust output regulation it is necessary that $\dim U\geq \dim Y$.

\begin{assumption}
For every $k\in \List{q}$ we have
  $i\gw_k\in\rho(A)$ and $P(i\gw_k)\in \Lin(U,Y)$ is surjective.
\end{assumption}

The dynamic error feedback controller~\eqref{eq:controllerintro} is 
an abstract linear system on a Banach space $Z$. The operator $\mc{G}_1:\Dom(\mc{G}_1)\subset Z\rightarrow Z$ generates a semigroup on $Z$, and $\mc{G}_2\in\Lin(Y,Z)$ and $K\in\Lin(Z_1,U)$ is admissible with respect to $\mc{G}_1$. 
The operator $K$ in~\eqref{eq:controllerintro} is replaced with its $\Lambda$-extension $\KL$.

The closed-loop system consisting of the plant~\eqref{eq:plantintro} and the controller~\eqref{eq:controllerintro} on the Banach space $X_e=X\times Z$ with state $x_e(t)=(x(t), z(t))^T$ is of the form 
  \eq{
  \dot{x}_e(t) &= A_ex_e(t)+B_ev(t), \qquad x_e(0)=x_{e0}\in X_e ,\\
  e(t) &=\CeL x_e(t)+D_ev(t),
  }
where $e(t)= y(t)- \yref(t)$ is the 
regulation error,
$x_{e0}=(x_0, z_0)^T$,  $C_e= (\CL, \; D\KL)$, 
$D_e=F$,
  \eq{
  A_e=\pmat{A_{-1}&B\KL\\\mc{G}_2\CL&\mc{G}_1+\mc{G}_2D\KL}, \qquad 
  B_e=\pmat{E\\\mc{G}_2F}.
  }
The operator $A_e: \Dom(A_e)\subset X_e\to X_e$ has the
domain
\eq{
\Dom(A_e)
&=  \Setm{(x,z)\in X_B\times \Dom(\mc{G}_1)}{A_{-1}x+B\KL z\in X}
}
where $X_B = \Dom(A) + \ran(R(\gl_0,A_{-1})B)$, and
 $\Dom(C_e) = \Dom(\CL)\times \Dom(\KL)\supset \Dom(A_e)$, $B_e \in \Lin(W,X\times Z)$ and $D_e\in\Lin(W,Y)$. Here $\CeL$ is the $\Lambda$-extension of $C_e$.

  \begin{theorem}
    \label{thm:CLreg}
The closed-loop system $(A_e,B_e,C_e,D_e)$ is a regular linear system.
    \end{theorem}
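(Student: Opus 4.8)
The plan is to recognise $(A_e,B_e,C_e,D_e)$ as the closed loop obtained from the parallel connection of the plant and the controller under a bounded admissible output feedback, and then to invoke the feedback theory for regular linear systems. I would first make a reduction. Since $W=\C^r$ is finite-dimensional and $E,\mc{G}_2,F$ are bounded, both $B_e\in\Lin(W,X_e)$ and $D_e=F$ are bounded. A system whose generator generates a $C_0$-semigroup, whose control operator is bounded into the state space, and whose observation operator is admissible is automatically regular: the bounded control operator yields well-posedness, while for the transfer function one has $\ran R(\gl,A_e)B_e\subset\Dom(A_e)\subset\Dom(\CeL)$, on which $\CeL$ agrees with $C_e$, and the admissibility estimate $\norm{\CeL R(\gl,A_e)}\to 0$ as $\gl\to+\infty$ forces $\CeL R(\gl,A_e)B_e+D_e\to D_e$. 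It therefore suffices to show that $A_e$ generates a $C_0$-semigroup on $X_e$ and that $C_e=(\CL,\,D\KL)$ is admissible with respect to $A_e$.

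The controller $(\mc{G}_1,\mc{G}_2,\KL,0)$ is itself a regular linear system, because its control operator $\mc{G}_2\in\Lin(Y,Z)$ is bounded and $K$ is admissible for $\mc{G}_1$ (by the same reduction). I would then consider the parallel connection $\tilde\Sigma$ with generator $\tilde A=\diag(A,\mc{G}_1)$, control operator $\tilde B=\diag(B,\mc{G}_2)$, observation operator $\tilde C=\diag(\CL,\KL)$ and feedthrough $\tilde D=\diag(D,0)$ on $X_e$, with input $(u,e)\in U\times Y$ and output $(y,\eta)\in Y\times U$; as a product of two regular linear systems it is regular, with $\tilde C_\Lambda=\diag(\CL,\KL)$. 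The closed loop of the theorem is exactly the output feedback $u=\eta$, $e=y$, that is, the bounded feedback operator $\mc{K}=\pmat{0&I\\I&0}\in\Lin(Y\times U,\,U\times Y)$.

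Because the controller has zero feedthrough, the loop carries no algebraic part:
\[
  I-\mc{K}\tilde D=\pmat{I&0\\-D&I},\qquad I-\tilde D\mc{K}=\pmat{I&-D\\0&I},
\]
are boundedly invertible, being triangular with identity diagonal. Hence $\mc{K}$ is an admissible feedback operator for $\tilde\Sigma$, and by the feedback theory for regular linear systems~\cite{Wei94,StaWei02,Sta05book} the closed-loop system is again regular. Its generator, the part of $\tilde A_{-1}+\tilde B(I-\mc{K}\tilde D)^{-1}\mc{K}\tilde C_\Lambda$ that maps into $X_e$, equals
\[
  \pmat{A_{-1}&B\KL\\\mc{G}_2\CL&\mc{G}_1+\mc{G}_2D\KL}=A_e
\]
on the domain $\Dom(A_e)$, so $A_e$ generates a $C_0$-semigroup; and the associated closed-loop observation operator
\[
  (I-\tilde D\mc{K})^{-1}\tilde C_\Lambda=\pmat{\CL&D\KL\\0&\KL}
\]
has $C_e$ as its first block row, so $C_e$ is admissible for $A_e$. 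Combined with the reduction, this gives the claim.

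The main obstacle is that the off-diagonal couplings $B\KL$ and $\mc{G}_2\CL$ are unbounded, so $A_e$ is not a bounded perturbation of $\diag(A,\mc{G}_1)$ and generation of the semigroup is genuinely nontrivial. The resolution is to read $A_e$ off as an admissible-feedback perturbation of a regular system; the key structural feature that makes the feedback admissible without any further estimate is the zero feedthrough of the controller, which renders $I-\mc{K}\tilde D$ triangular and hence trivially invertible.
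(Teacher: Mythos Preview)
Your approach is correct and is the standard feedback-theoretic argument; the paper itself does not give a proof but defers to~\cite[Sec.~8]{PauPoh14a}, where precisely this construction (diagonal product of plant and controller, closed by the bounded permutation feedback) is carried out.

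One technical point deserves sharpening. For regular linear systems, bounded invertibility of $I-\mc{K}\tilde D$ alone is \emph{not} in general sufficient for $\mc{K}$ to be an admissible feedback operator; one must verify that $(I-\mc{K}\tilde G(\gl))^{-1}$ is uniformly bounded on some right half-plane, where $\tilde G$ is the transfer function of $\tilde\Sigma$. In the present setting this does hold, and the real reason is the one you allude to only indirectly: the controller has bounded input operator $\mc{G}_2$ and admissible $K$, so its transfer function satisfies $\norm{G_c(\gl)}=\norm{\KL R(\gl,\mc{G}_1)\mc{G}_2}\to 0$ as $\re\gl\to\infty$, while $\norm{P(\gl)}$ is uniformly bounded on a right half-plane by well-posedness of the plant. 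Hence the loop gain $P(\gl)G_c(\gl)\to 0$ in norm, the Schur complement $I-P(\gl)G_c(\gl)$ is invertible with uniformly bounded inverse, and so is $I-\mc{K}\tilde G(\gl)$. The triangularity of $I-\mc{K}\tilde D$ is a consequence of the zero controller feedthrough, but it is this decay of $G_c$ that actually yields admissibility; you should make that step explicit rather than inferring admissibility from the static invertibility alone.
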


    \begin{proof}
      See~\cite[Sec. 8]{PauPoh14a}.
    \end{proof}

\section[The Robust Output Regulation Problem]{The Robust Output Regulation Problem and the Internal Model Principle}
\label{sec:RORP}

We can now formulate the robust output regulation problem. 
We consider perturbations $(\tilde{A},\tilde{B},\tilde{C},\tilde{D},\tilde{E},\tilde{F})\in \Ops$ of 
$(A,B,C,D,E,F)$ where the operators in the class $\Ops$ of admissible perturbations are such that (i) the perturbed plant $(\tilde{A},\tilde{B},\tilde{C},\tilde{D})$ is a regular linear system and (ii) $i\gw_k\in \rho(\tilde{A})$ for all $k\in \List{q}$. 
These two conditions are in particular satisfied for all bounded and sufficiently small perturbations to $(A,B,C,D)$, and for arbitrary bounded perturbations to the operators $E$ and $F$.

\begin{RORP}
  Choose the controller $(\mc{G}_1,\mc{G}_2,K)$ in such a way that the following are satisfied:
\begin{itemize}
  \item[\textup{(a)}] The closed-loop semigroup $T_e(t)$ is exponentially stable.
  \item[\textup{(b)}] 
  For all initial states $x_{e0}\in X_e$ and $v_0\in W$ the regulation error satisfies $e^{\ga\cdot}e(\cdot)\in\Lp[2](0,\infty;Y)$ for some $\ga>0$.
\item[\textup{(c)}] If the operators $(A,B,C,D,E,F)$ are perturbed to 
  $(\tilde{A},\tilde{B},\tilde{C},\tilde{D},\tilde{E},\tilde{F})\in \Ops$ in such a way that the closed-loop system remains exponentially stable, then for all initial states $x_{e0}\in X_e$ and  $v_0\in W$ the regulation error satisfies $e^{\tilde{\ga}\cdot}e(\cdot)\in\Lp[2](0,\infty;Y)$ for some $\tilde{\ga}>0$.  \end{itemize}
\end{RORP}

We have from~\cite[Sec. 4]{PauPoh14a} that for initial states $x_{e0}\in \Dom(A_e)$ the regulation error $e(\cdot)$ is a continuous function and $\lim_{t\to\infty}e(t)=0$ whenever the property (b) holds. Thus for such initial states the condition $e^{\ga\cdot}e(\cdot)\in\Lp[2](0,\infty;Y)$ for an $\ga>0$ implies that the regulation error decays to zero at an exponential rate.

In the following we present two definitions for an internal model~\cite{PauPoh10,PauPoh14a}.
In Definition~\ref{def:pcopy}
``independent Jordan chains'' refer
to chains originating from linearly independent eigenvectors of $\mc{G}_1$.

\begin{definition}
  \label{def:pcopy}
  Assume $\dim Y<\infty$. A controller $(\mc{G}_1,\mc{G}_2,K)$ is said to\/ {\em incorporate a p-copy internal model} of the exosystem $S$ if for all $k\in\List{q}$ we 
have
\eq{
\dim\ker(i\gw_k-\mc{G}_1)\geq\dim\, Y
}
and\/ $\mc{G}_1$ has at least\/ $\dim Y$ independent Jordan chains of length greater than or equal to~$n_k$ associated to 
the eigenvalue
$i\gw_k$.
\end{definition}

\begin{definition}
  \label{def:Gconds}
  A controller\/ $(\mc{G}_1,\mc{G}_2,K)$ is said to satisfy the\/ {\em\Gconds} 
  if
\begin{subequations}
  \label{eq:Gconds}
\eqn{
\ran(i\gw_k-\mc{G}_1)\cap\ran(\mc{G}_2)&=\set{0} ~\qquad ~ \forall k\in\List{q} , \label{eq:Gconds1} 
\\[.3ex]
\ker(\mc{G}_2)&=\set{0}, \label{eq:Gconds2}\\
\label{eq:Gconds3}
\MoveEqLeft[9] \ker(i\gw_k-\mc{G}_1)^{n_k-1} 
\subset\ran(i\gw_k-\mc{G}_1) \quad  \forall k\in\List{q}.
}
\end{subequations} 
\end{definition}

The following lemma gives a sufficient condition for invariance of the \Gconds\ in the situation where the matrix $S$ of the exosystem is diagonal. 

\begin{lemma}
  \label{lem:Gcondsinvariance}
  Let $S$ be a diagonal matrix.
  If the operators $(\mc{G}_1,\mc{G}_2)$ satisfy   the \Gconds,
  and if $K: \Dom(\mc{G}_1)\subset Z\to Y$ is such that
  $\ker(i\gw_k-\mc{G}_1)\subset \ker(K)$ for all $k\in\List{q}$, then also $(\mc{G}_1+ \mc{G}_2K,\mc{G}_2)$ satisfy   the \Gconds.
\end{lemma}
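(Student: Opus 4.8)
The plan is to exploit the diagonality of $S$ to reduce the verification to a single condition. Since $S$ is diagonal, every Jordan block has size $n_k=1$, so condition~\eqref{eq:Gconds3} becomes $\ker(i\gw_k-\mc{G}_1)^{0}=\set{0}\subset\ran(i\gw_k-\mc{G}_1)$, which holds vacuously for any operator; in particular it is automatic for the modified pair $(\mc{G}_1+\mc{G}_2K,\mc{G}_2)$. Condition~\eqref{eq:Gconds2} involves only $\mc{G}_2$, which is unchanged, so it is inherited directly from the hypothesis. The entire content of the lemma therefore reduces to verifying~\eqref{eq:Gconds1} for the modified pair.

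For this, I would write $\tilde{\mc{G}}_1:=\mc{G}_1+\mc{G}_2K$, noting that it shares the domain $\Dom(\mc{G}_1)$ since $K$ is defined on $\Dom(\mc{G}_1)$ and $\mc{G}_2$ is bounded. Taking an arbitrary $y\in\ran(i\gw_k-\tilde{\mc{G}}_1)\cap\ran(\mc{G}_2)$, the goal is to show $y=0$. Writing $y=(i\gw_k-\tilde{\mc{G}}_1)z$ for some $z\in\Dom(\mc{G}_1)$ and $y=\mc{G}_2w$ for some $w\in Y$, and expanding $\tilde{\mc{G}}_1$, I would rearrange the two expressions into the single identity $(i\gw_k-\mc{G}_1)z=\mc{G}_2(w+Kz)$.

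The right-hand side lies in $\ran(\mc{G}_2)$ and the left-hand side in $\ran(i\gw_k-\mc{G}_1)$, so both belong to the intersection $\ran(i\gw_k-\mc{G}_1)\cap\ran(\mc{G}_2)$, which is $\set{0}$ by the original condition~\eqref{eq:Gconds1}. Hence $(i\gw_k-\mc{G}_1)z=0$, that is, $z\in\ker(i\gw_k-\mc{G}_1)$. The hypothesis $\ker(i\gw_k-\mc{G}_1)\subset\ker(K)$ then gives $Kz=0$, and substituting this back yields $\mc{G}_2w=\mc{G}_2(w+Kz)=0$, so $y=\mc{G}_2w=0$. This establishes~\eqref{eq:Gconds1} for $(\mc{G}_1+\mc{G}_2K,\mc{G}_2)$ and finishes the argument.

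I do not anticipate a serious obstacle; the argument is elementary once the definitions are unwound. The only points needing care are the domain bookkeeping---confirming that $\tilde{\mc{G}}_1$ and $K$ act on the same space and that the eigenvectors $z\in\ker(i\gw_k-\mc{G}_1)$ indeed lie in $\Dom(K)=\Dom(\mc{G}_1)$---and the observation that diagonality is precisely what trivializes~\eqref{eq:Gconds3}. Without diagonality one would instead have to control the interaction of $K$ with the generalized eigenspaces along Jordan chains of length $n_k>1$, where the single hypothesis $\ker(i\gw_k-\mc{G}_1)\subset\ker(K)$ would no longer suffice.
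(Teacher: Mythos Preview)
Your proposal is correct and follows essentially the same route as the paper's proof: both reduce to condition~\eqref{eq:Gconds1} by noting that diagonality trivializes~\eqref{eq:Gconds3} and that~\eqref{eq:Gconds2} is unchanged, then rearrange $(i\gw_k-\mc{G}_1-\mc{G}_2K)z=\mc{G}_2w$ into $(i\gw_k-\mc{G}_1)z=\mc{G}_2(w+Kz)$, apply the original~\eqref{eq:Gconds1} to force $z\in\ker(i\gw_k-\mc{G}_1)$, and use the kernel hypothesis to conclude.
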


\begin{proof}
  Since $S$ is a diagonal matrix, we have $n_k=1$ for all $k\in \List{q}$ and the condition~\eqref{eq:Gconds3} is trivially satisfied.
  Because the condition $\ker(\mc{G}_2)=\set{0}$ is identical for both $(\mc{G}_1 + \mc{G}_2K,\mc{G}_2)$ and $(\mc{G}_1 ,\mc{G}_2)$, it is sufficient to show that $\ran(i\gw_k-\mc{G}_1+\mc{G}_2K)\cap\ran(\mc{G}_2)=\set{0}$ for all $k$.
  To this end, let $w=(i\gw_k-\mc{G}_1-\mc{G}_2K)z=\mc{G}_2y$ for some $k\in\List{q}$, $z\in \Dom(\mc{G}_1)$ and $y\in Y$. This implies $(i\gw_k-\mc{G}_1)z=\mc{G}_2(y+Kz)\in \ran(i\gw_k-\mc{G}_1)\cap \ran(\mc{G}_2)$, and we thus have $z\in \ker(i\gw_k-\mc{G}_1)$. Due to our assumptions we then also have $Kz=0$ and $w=(i\gw_k-\mc{G}_1)z=\mc{G}_2y$, which finally  imply $w=0$ due to~\eqref{eq:Gconds1}.
\end{proof}

The following theorem presents the internal model principle for regular linear systems with finite-dimensional exosystems and exponential closed-loop stability.

\begin{theorem}
  \label{thm:RORPchar}
  Assume that the controller stabilizes the closed-loop system exponentially.
Then the controller solves the robust output regulation problem if and only if it satisfies the \Gconds. 

Moreover, if $\dim Y<\infty$, then the controller solves the robust output regulation problem if and only if it incorporates a p-copy internal model of the exosystem.
\end{theorem}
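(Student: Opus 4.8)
My plan is to reduce both equivalences to the general internal model principle of \cite{PauPoh14a}, exploiting the exponential stability of the closed loop to sharpen its conclusions. The first step is to characterize regulation, i.e.\ property~(b), through a Sylvester equation. Since $T_e(t)$ is exponentially stable, $\re\gs(A_e)$ is bounded away from zero, whereas $\gs(S)=\set{i\gw_k}_{k=1}^q\subset i\R$; hence for the closed-loop regular system (Theorem~\ref{thm:CLreg}) the regulator equation $\Sigma S=A_e\Sigma+B_e$ has a unique solution $\Sigma\in\Lin(W,X_e)$. Writing the closed-loop state as $x_e(t)=T_e(t)(x_{e0}-\Sigma v_0)+\Sigma v(t)$ splits the response into an exponentially decaying transient and the persistent term $\Sigma v(t)$, so that $e(t)=\CeL T_e(t)(x_{e0}-\Sigma v_0)+(\CeL\Sigma+D_e)v(t)$. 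Because $v(t)$ is a sum of terms of the form $t^j e^{i\gw_k t}$, the exponential weight forces any nonzero persistent component to leave $\Lp[2](0,\infty;Y)$; thus property~(b) is equivalent to the output condition $\CeL\Sigma+D_e=0$, and when it holds the spectral gap yields $e^{\ga\cdot}e(\cdot)\in\Lp[2](0,\infty;Y)$ for every $\ga$ below the decay rate of $T_e(t)$. This is exactly the exponential error decay asserted in the theorem, and the regular-system justification of the decomposition is as in \cite[Sec.~4]{PauPoh14a}.

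The second step treats robustness. By the first step, regulation for a perturbed plant $\sysops\in\Ops$ that preserves exponential stability is equivalent to the perturbed output condition $\tilde{C}_{e\Lambda}\tilde\Sigma+\tilde{D}_e=0$, where $\tilde\Sigma$ solves the perturbed Sylvester equation. Property~(c) therefore demands that this hold simultaneously for every admissible perturbation retaining exponential stability. The essential point, established in \cite{PauPoh14a}, is that this simultaneous solvability is governed entirely by the controller parameters $(\mc{G}_1,\mc{G}_2)$ through the \Gconds: conditions~\eqref{eq:Gconds1}--\eqref{eq:Gconds3} are precisely what force the perturbed output condition independently of the plant data. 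Since exponential stability implies strong stability, the general equivalence of \cite{PauPoh14a} applies; conversely, the exponential setting removes the nonresonance-type standing hypotheses required for infinite-dimensional exosystems, so no additional assumptions survive. This establishes the first equivalence.

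For the necessity direction I would make the perturbation argument explicit in order to confirm that it remains within $\Ops$. If one of~\eqref{eq:Gconds1}--\eqref{eq:Gconds3} fails, one constructs a bounded, arbitrarily small perturbation of $(A,B,C,D,E,F)$ --- hence admissible and preserving exponential stability --- for which the perturbed output condition is violated, so that regulation breaks down. I expect this step, together with verifying that the constructed perturbations genuinely lie in $\Ops$ and keep the closed loop exponentially stable, to be the main obstacle, since the sufficiency direction is a direct consequence of the Sylvester computation above.

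Finally, for the case $\dim Y<\infty$ I would invoke the purely operator-theoretic equivalence between the two internal-model definitions: the \Gconds\ hold if and only if the controller incorporates a p-copy internal model in the sense of Definition~\ref{def:pcopy}. Concretely, \eqref{eq:Gconds2} together with \eqref{eq:Gconds1} is equivalent to $\dim\ker(i\gw_k-\mc{G}_1)\ge\dim Y$ with $\ran(\mc{G}_2)$ transversal to $\ran(i\gw_k-\mc{G}_1)$, while \eqref{eq:Gconds3} encodes the existence of $\dim Y$ independent Jordan chains of length at least $n_k$ at each $i\gw_k$; this equivalence is the one recorded in \cite{PauPoh10,PauPoh14a}. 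Combining it with the first part of the theorem yields the p-copy characterization and completes the proof.
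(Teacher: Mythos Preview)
Your outline follows the paper's proof closely: both reduce to \cite[Thm.~7.2]{PauPoh14a} by using the Sylvester decomposition $x_e(t)=T_e(t)(x_{e0}-\Sigma v_0)+\Sigma T_S(t)v_0$ to show that, under exponential closed-loop stability, the condition $e^{\ga\cdot}e(\cdot)\in\Lp[2](0,\infty;Y)$ for all $x_{e0}\in X_e$ is equivalent to $\lim_{t\to\infty}e(t)=0$ for all $x_{e0}\in\Dom(A_e)$, and that the Sylvester equation is automatically solvable after any stability-preserving perturbation.

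Two points deserve correction. For necessity you propose constructing an ``arbitrarily small'' perturbation of $(A,B,C,D,E,F)$. The paper instead observes that the necessity argument in \cite[Thm.~7.2]{PauPoh14a} uses only the perturbations $\tilde E=0$ together with \emph{arbitrary} bounded perturbations of $F$. These affect only the exosystem; since $A_e$ does not depend on $E$ or $F$, exponential closed-loop stability is trivially preserved and the perturbations lie in $\Ops$. Smallness is neither needed nor sufficient here --- the argument requires full freedom in choosing $\tilde F$ to probe each component of $W$ --- so the obstacle you anticipate disappears once you identify the correct perturbations.

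For the p-copy statement the paper does not pass through an operator-theoretic equivalence between the \Gconds\ and Definition~\ref{def:pcopy}; it invokes \cite[Thm.~6.2]{PauPoh14a} directly, which proves ``RORP $\Leftrightarrow$ p-copy'' independently. Your route via \cite{PauPoh10} is legitimate, but the equivalence of the two internal-model definitions is not purely operator-theoretic: one direction uses the decomposition $Z=\ran(i\gw_k-\mc{G}_1)+\ran(\mc{G}_2)$, which in \cite{PauPoh10} is obtained from closed-loop stability rather than from $(\mc{G}_1,\mc{G}_2)$ alone.
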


\begin{proof}
Since $A_e$ generates an exponentially stable semigroup and $S$ is a matrix with spectrum on $i\R$, the Sylvester equation $\Sigma S = A_e\Sigma + B_e$ has a unique solution $\Sigma\in \Lin(W,X_e)$ satisfying $\ran(\Sigma)\subset \Dom(A_e)$~\cite{Vu91}.
Because an exponentially stable semigroup is also strongly stable, and since $i\R\subset \rho(A_e)$, we have from~\cite[Thm. 7.2]{PauPoh14a} that the controller satisfies the \Gconds\ if and only if it solves the robust output regulation problem as defined in the reference~\cite{PauPoh14a}. 
The definition of the robust output regulation problem in~\cite{PauPoh14a} can be obtained from our problem statement with the following modifications: 
\begin{itemize}
  \item[(i)] The exponential closed-loop stability is replaced by strong stability.
  \item[(ii)] 
    It is assumed that
    for all admissible perturbations 
    the Sylvester equation $\Sigma S=\tilde{A}_e\Sigma +\tilde{B}_e$ has a solution.
  \item[(iii)] The condition $e^{\ga\cdot}e(\cdot)\in \Lp[2](0,\infty;Y)$ for $x_{e0}\in X_e$ is replaced by $\lim_{t\to\infty}e(\cdot)=0$ for $x_{e0}\in \Dom(A_e)$.
\end{itemize}

We begin by showing that under the assumption of exponential closed-loop stability the two conditions in~(iii) are equivalent.
We prove this only for the nominal closed-loop system $(A_e,B_e,C_e,D_e)$.  For perturbed parameters
the situation can be handled analogously.
We have from~\cite[Lem. 4.3]{PauPoh14a} that 
  \eq{
  x_e(t) = T_e(t)x_{e0}-T_e(t)\Sigma v_0 + \Sigma T_S(t)v_0
  }
  for all $x_{e0}\in X_e$ and $v_0\in W$, and since $(A_e,B_e,C_e,D_e)$ is a regular linear system,
  \eq{
 e(t) =C_{e\Lambda}T_e(t)(x_{e0}-\Sigma v_0) + (C_e\Sigma + D_e)T_S(t)v_0
  }
  is defined for almost all $t\geq 0$. In addition, if $x_{e0}\in \Dom(A_e)$, then $e(t)$ is continuous and is given by the above formula for all $t\geq 0$. 
  The error contains the two terms $e(t)=e_1(t) + e_2(t)$. The second term $e_2(\cdot) = (C_e\Sigma + D_e)T_S(\cdot)v_0$ is continuous and it is either nonvanishing or identically zero~\cite[Lem. A.1]{PauPoh14a}.
  Since $T_e(t)$ is exponentially stable, for some $\ga>0$ the first term satisfies $e^{\ga\cdot}e_1(\cdot)\in \Lp[2](0,\infty;Y)$ for all $x_{e0}\in X_e$ and $v_0\in W$. These properties imply, under the assumption of exponential stability of the closed-loop system, that the regulation error satisfies $e^{\ga\cdot}e(\cdot)\in \Lp[2](0,\infty;Y)$ for all $x_{e0}\in X_e$ and $v_0\in W$ if and only if $\lim_{t\to\infty}e(t)=0$ for all $x_{e0}\in \Dom(A_e)$ and $v_0\in W$.
Thus the conditions in~(iii) are equivalent.

Assume now that the controller satisfies the \Gconds.
The class of admissible perturbations in this paper is strictly smaller than the class of perturbations in~\cite{PauPoh14a} because exponential stability is stronger than strong stability, and because 
$\Sigma S=\tilde{A}_e\Sigma +\tilde{B}_e$ has a solution for any perturbations for which the closed-loop system is exponentially stable~\cite{Vu91}. Because of this, and 
because we assumed the exponential closed-loop stability, we have from~\cite[Thm. 7.2]{PauPoh14a} that the controller satisfying the \Gconds\ solves the robust output regulation problem as defined in this paper.

 Conversely, we can now assume that the controller solves the robust output regulation problem. It then follows from the proof of~\cite[Thm. 7.2]{PauPoh14a} that the controller must satisfy the \Gconds\ provided that the class of admissible perturbations contains $\tilde{E}=0$ (corresponding to the zero disturbance signal) and arbitrary bounded
perturbations to the operator~$F$ of the exosystem. Because these perturbations do not affect the stability of the closed-loop system, they also belong to the class $\Ops$ of perturbations in this paper. This concludes that the controller indeed satisfies the \Gconds.

Finally, if $\dim Y<\infty$, we similarly have from~\cite[Thm. 6.2]{PauPoh14a} that the controller solves the robust output regulation problem if and only if it incorporates a p-copy internal model of the exosystem.
\end{proof}

\section{The Minimal Robust Controller for Stable Systems}
\label{sec:Mincontr}

In this section we construct a minimal order robust controller under the assumption that the system operator $A$ of the regular linear system~\eqref{eq:plantintro} generates an exponentially stable semigroup
and the matrix $S$ of the exosystem is diagonal, i.e.,
\eq{
S=\diag(i\gw_1,i\gw_2,\ldots,i\gw_q)\in \C^{q\times q}.
} 
We begin by choosing the parameters of the controller.
In this controller structure the system operator $\mc{G}_1$ contains precisely the internal model of the exosystem~\eqref{eq:exointro}. This is achieved by defining
$Z = Y^q $, and
\eq{
\mc{G}_1 &= \diag \bigl( i\gw_1 I_Y,\ldots,i\gw_q I_Y \bigr)
\in \Lin(Z),
\qquad K = \eps K_0 = \eps\bigl(K_0^1,\ldots,K_0^q\bigr)
\in \Lin(Z,U)
}
where $\eps>0$ and $K_0\in \Lin(Z,U)$. We choose the components $K_0^k\in \Lin(Y,U)$ of $K_0$ in such a way that the operators $P(i\gw_k)K_0^k$ are invertible. This is possible  due to the assumption of surjectivity of $P(i\gw_k)$, and can be achieved, for example, by choosing $K_0^k=P(i\gw_k)\pinv$
(the Moore--Penrose pseudoinverse of $P(i\gw_k)$)
for all $k\in\List{q}$. Finally, we choose 
\eq{
\mc{G}_2
=(\mc{G}_2^k)_{k=1}^q
= (-(P(i\gw_k)K_0^k)^\ast)_{k=1}^q
\in \Lin(Y,Z).
}
If we make the choice $K_0^k=P(i\gw_k)\pinv$, then $\mc{G}_2^k=-I_Y$ for all $k\in \List{q}$.

\begin{theorem}
  \label{thm:SimpleContrMain}
Assume that the semigroup $T(t)$ generated by $A$ is exponentially stable and $S$ is a diagonal matrix.
  Then there exists $\eps^\ast>0$ such that for any $0<\eps\leq \eps^\ast$ the controller with the above choices of parameters solves the robust output regulation problem.

  In particular, the operators $(\mc{G}_1,\mc{G}_2)$ satisfy the \Gconds\ and the closed-loop system is exponentially stable for all $0<\eps\leq \eps^\ast$.
\end{theorem}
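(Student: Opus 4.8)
The plan is to invoke the internal model principle, Theorem~\ref{thm:RORPchar}: since the closed loop is a regular linear system by Theorem~\ref{thm:CLreg}, the operator $A_e$ generates a $C_0$-semigroup for every $\eps>0$, so it will remain only to check two things — (i) that $(\mc{G}_1,\mc{G}_2)$ satisfy the \Gconds\ of Definition~\ref{def:Gconds}, and (ii) that the closed-loop semigroup $T_e(t)$ is exponentially stable for all sufficiently small $\eps$. Part (i) is independent of $\eps$; the substance of the argument is the uniform-in-$\eps$ exponential stability in (ii).

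First I would verify the \Gconds. Since $S$ is diagonal with simple spectrum, the frequencies $\gw_j$ are distinct and every $n_k=1$, so condition~\eqref{eq:Gconds3} is void (its left-hand side is $\set{0}$), and it suffices to check~\eqref{eq:Gconds1} and~\eqref{eq:Gconds2}. Writing $z=(z_j)_{j=1}^q\in Y^q$, diagonality of $\mc{G}_1$ together with $i\gw_k-i\gw_j\ne0$ for $j\ne k$ gives $\ran(i\gw_k-\mc{G}_1)=\setm{z\in Y^q}{z_k=0}$. Setting $N_k:=P(i\gw_k)K_0^k$, which is invertible by the choice of $K_0^k$, the component $\mc{G}_2^k=-N_k^\ast$ is invertible as well. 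If $w=\mc{G}_2 y\in\ran(i\gw_k-\mc{G}_1)$, then its $k$-th component $-N_k^\ast y$ vanishes, forcing $y=0$ and hence $w=0$; this is~\eqref{eq:Gconds1}. Likewise $\mc{G}_2 y=0$ forces $N_1^\ast y=0$, so $y=0$, which is~\eqref{eq:Gconds2}.

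For the stability claim I would reduce the spectral analysis to the controller space $Z$. Using $\KL=\eps K_0$, the eigenvalue equation $A_e(x,z)=\gl(x,z)$ with $\gl\in\rho(A)$ yields $x=\eps R(\gl,A_{-1})BK_0 z$; applying $\CL$ and using $\CL R(\gl,A)B=P(\gl)-D$, the two rows collapse (the $D$-terms cancel) to the reduced equation $(\gl-\mc{G}_1)z=\eps\,\mc{G}_2 P(\gl)K_0 z$. More generally, a Schur-complement computation shows that for $\gl\in\rho(A)$ one has $\gl\in\rho(A_e)$ if and only if $\gl-\mc{G}_1-\eps\,\mc{G}_2 P(\gl)K_0\in\Lin(Z)$ is boundedly invertible. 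Localizing near $i\gw_k$ and writing $\gl=i\gw_k+\eps\mu$, distinctness of the $\gw_j$ forces $z_j=O(\eps)$ for $j\ne k$, and to leading order the equation becomes $\mu z_k=\mc{G}_2^k P(i\gw_k)K_0^k z_k=-N_k^\ast N_k\,z_k$. Since $N_k^\ast N_k$ is self-adjoint and positive definite on the Hilbert space $Y$, one has $\gs(N_k^\ast N_k)\subset[\,\norm{N_k^{-1}}^{-2},\norm{N_k}^2\,]$, so the critical modes migrate into $\set{\re\gl\le-c\eps}$ with $c=\min_k\norm{N_k^{-1}}^{-2}>0$, while the remaining spectrum stays near $\gs(A)\subset\set{\re\gl<0}$.

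The main obstacle will be upgrading this spectral picture to genuine exponential stability of the semigroup, since $X$ is only a Banach space (ruling out Gearhart–Prüss) and $Y$ may be infinite-dimensional. I would resolve it by a spectral decomposition uniform in $\eps$: choosing fixed small circles $\Gamma_k$ around each $i\gw_k$ that the estimates above keep spectrum-free for $0<\eps\le\eps^\ast$, the associated Riesz projection splits $X_e=X_e^c\oplus X_e^s$ into $A_e$- and $T_e$-invariant parts. On $X_e^c$ the restriction of $A_e$ is bounded (an isolated compact spectral set) with spectrum in $\set{\re\gl\le-c\eps}$, so the spectral mapping theorem for bounded generators gives decay at rate $\sim\eps$; on $X_e^s$ the operator is a small admissible perturbation of the $\eps=0$ generator, whose spectral part there already generates an exponentially stable semigroup, so robustness of exponential stability under small regular perturbations applies. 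Combining the two blocks yields $\norm{T_e(t)}\le Me^{-c'\eps t}$ for $0<\eps\le\eps^\ast$. I expect the delicate points to be the uniformity of the spectral gap as $\eps\to0$ and the treatment of the unbounded operators $B$ and $\CL$ in the perturbation on $X_e^s$; alternatively, one could verify the hypotheses of a low-gain robust-stabilization theorem from the literature, the decisive condition being precisely that $\mc{G}_2^k P(i\gw_k)K_0^k=-N_k^\ast N_k$ has spectrum in the open left half-plane.
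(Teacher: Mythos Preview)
Your verification of the \Gconds\ is correct and essentially matches the paper's argument.

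For closed-loop stability, however, the paper takes a very different and more direct route that sidesteps the spectral difficulties you flag. Instead of analyzing $\gs(A_e)$ via the Schur complement and Riesz projections, the paper writes down the explicit solution $H=(H_k)_{k=1}^q$, $H_k=R(i\gw_k,A_{-1})BK_0^k$, of the Sylvester equation $H\mc{G}_1=A_{-1}H+BK_0$, observes that $\CL H+DK_0=-\mc{G}_2^\ast$, and then applies the bounded similarity
\[
Q_e=\pmat{-I&\eps H\\0&I}=Q_e^{-1}.
\]
A direct computation gives
\[
Q_eA_eQ_e^{-1}=\pmat{A-\eps H\mc{G}_2\CL &0\\-\mc{G}_2\CL&\mc{G}_1-\eps\mc{G}_2\mc{G}_2^\ast}+\eps^2\pmat{0&-H\mc{G}_2\mc{G}_2^\ast\\0&0}
\]
on the domain $\Dom(A)\times Z$. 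The diagonal blocks generate exponentially stable semigroups for small $\eps$ (the first by admissible perturbation of $A$, the second by Lemma~\ref{lem:G1fbstab}), the triangular operator is then exponentially stable because $\CL$ is admissible, and the $O(\eps^2)$ remainder is bounded, so ordinary semigroup perturbation finishes the job. No spectral-mapping or Gearhart--Pr\"uss argument is needed, and the proof works verbatim on Banach $X$ with infinite-dimensional $Y$.

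Your Riesz-projection plan is not wrong in spirit, but the two ``delicate points'' you identify are genuine gaps, not just technicalities. Showing rigorously (not only to leading order) that the spectrum inside each $\Gamma_k$ lies in $\set{\re\gl\le -c\eps}$ when $Y$ is infinite-dimensional, and establishing exponential stability on $X_e^s(\eps)$ uniformly in $\eps$ with the unbounded $B,\CL$ present, would each require substantial additional argument---essentially reproving a low-gain stabilization theorem, as you note in your final sentence. The similarity transform via the Sylvester solution is precisely the device that collapses all of this into a few lines.
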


\begin{proof}
We begin by showing that the controller satisfies the \Gconds. 
Since $K_0^k$ were chosen in such a way that $P(i\gw_k)K_0^k$ are invertible for all $k\in \List{q}$, we have that $\ker(\mc{G}_2)=\set{0}$. Let 
$k\in \List{q}$, 
$z,z_1\in Z$, and $y\in Y$ be such that $z=(i\gw_k-\mc{G}_1)z_1=\mc{G}_2y$. The diagonal structure of $\mc{G}_1$ implies that we then necessarily have $0=\mc{G}_2^k y =-(P(i\gw_k)K_0^k)^\ast y$, which is only possible if $y=0$ since $P(i\gw_k)K_0^k$ is invertible. This further implies $z=\mc{G}_2y=0$. Since $k\in \List{q}$ and $z\in \ran(i\gw_k-\mc{G}_1)\cap \ker(\mc{G}_2)$ were arbitrary, this concludes $\ran(i\gw_k-\mc{G}_1)\cap \ran(\mc{G}_2)=\set{0}$.
Finally, since $n_k=1$ for all $k\in \List{q}$, the condition~\eqref{eq:Gconds3} is trivially satisfied.

We define  $H = (H_1,H_2,\ldots,H_q)\in \Lin(Z,X)$ 
by choosing 
\eq{
H_k =  R(i\gw_k,A_{-1}) B K_0^k
}
for all $k\in \List{q}$.
Due to the diagonal structure of $\mc{G}_1$, it is easy to see that this operator is the unique solution of the Sylvester equation $H \mc{G}_1 = A_{-1}H+ BK_0$.
Clearly $\ran(H)\subset X_B$ and we can define $C_0=\CL H+DK_0\in \Lin(Z,Y)$. 
The operator $C_0$ is of the form $C_0=(C_0^1,\ldots,C_0^q)$. A direct computation shows that
\eq{
C_0^k  
= \CL H_k +DK_0^k
= \CL R(i\gw_k,A_{-1})BK_0^k +DK_0^k
= P(i\gw_k)K_0^k,
}
and thus $C_0 = -\mc{G}_2^\ast$.

It remains to show that there exists $\eps^\ast>0$ such that the closed-loop system is exponentially stable for all $0<\eps\leq \eps^\ast$. The closed-loop system operator is given by
\eq{
A_e  = \pmat{A_{-1}&\eps BK_0\\\mc{G}_2\CL & \mc{G}_1 + \eps \mc{G}_2 DK_0},
\qquad \Dom(A_e)  = \Setm{(x,z)\in X_B\times Z}{A_{-1}x+\eps BK_0z\in X}.
}
If we choose a similarity transformation 
\eq{
Q_e = \pmat{-I&\eps H\\0&I} = Q_e\inv\in \Lin(X\times Z)
} 
we can define $\hat{A}_e = Q_eA_eQ_e\inv$ with domain $\Dom(\hat{A}_e)=\Setm{x_e\in X_e}{Q_e\inv x_e \in \Dom(A_e)}$. Using $\ran(H)\subset X_B$ and $\ran(A_{-1}H+BK_0)=\ran(H \mc{G}_1)\subset X$ the 
condition $Q_e\inv x_e\in \Dom(A_e)$  for $x_e=(x,z)\in X\times Z$ 
becomes
\eq{
Q_e\inv x_e \in \Dom(A_e) 
\quad \Leftrightarrow\quad 
& \left\{
\begin{array}{l}
  -x + \eps Hz \in X_B\\
 - A_{-1}x + \eps A_{-1} Hz +\eps BK_0 z\in X\\
\end{array}
\right.
\\
\Leftrightarrow \quad  & ~
x_e\in \Dom(A)\times Z,
}
and thus $\Dom(\hat{A}_e)=\Dom(A)\times Z$.
Now for any $x_e = (x,z)\in \Dom(\hat{A}_e)$ a direct computation using $H \mc{G}_1=A_{-1}H+BK_0$ and $\CL H+DK_0=-\mc{G}_2^\ast$ shows that
  \eq{
  \MoveEqLeft[-.5]\hat{A}_e x_e 
  = Q_eA_e \pmat{-x+\eps Hz\\z}\\
  &=\pmat{(A-\eps H\mc{G}_2\CL)x +\eps (-A_{-1}H-BK_0+ H\mc{G}_1)z-\eps^2 H\mc{G}_2 \mc{G}_2^\ast z\\-\mc{G}_2\CL x+ (\mc{G}_1-\eps \mc{G}_2 \mc{G}_2^\ast)z} \\
  &=\pmat{(A-\eps H\mc{G}_2\CL)x -\eps^2 H\mc{G}_2 \mc{G}_2^\ast z\\-\mc{G}_2\CL x+ (\mc{G}_1-\eps \mc{G}_2 \mc{G}_2^\ast)z} \\
  &=\left[ \pmat{A-\eps H\mc{G}_2\CL &0\\-\mc{G}_2\CL &\hspace{-.3cm}\mc{G}_1-\eps \mc{G}_2 \mc{G}_2^\ast}+\eps^2 \pmat{0&\hspace{-.2cm}-H\mc{G}_2 \mc{G}_2^\ast\\0&0} \right] \pmat{x\\z}
  }
Since $C$ is admissible with respect to $A$, we have from the results in~\cite[Sec. III.3.c]{EngNag00book} that there exists $\eps_1>0$ such that $A+\eps H\mc{G}_2\CL $ generates an exponentially stable semigroup provided that $0<\eps\leq \eps_1$.
Moreover, Lemma~\ref{lem:G1fbstab} shows that the semigroup generated by $\mc{G}_1-\eps \mc{G}_2 \mc{G}_2^\ast$ is exponentially stable for all $\eps>0$, since $\sqrt{\eps}\mc{G}_2^k = -\sqrt{\eps} (P(i\gw_k)K_0^k)^\ast$ are invertible for all $k\in \List{q}$.  
Because $\CL$ is an admissible input operator for $A-\eps H \mc{G}_2 \CL$, $\mc{G}_2\in \Lin(Y,Z)$, and the diagonal operators generate exponentially stable semigroups, the semigroup generated by the triangular operator is exponentially stable for all $0<\eps\leq \eps_1$. Furthermore, because the second term is a bounded operator, it follows from standard perturbation theory of semigroups and similarity that there there exists $\eps^\ast>0$ such that $A_e$ is exponentially stable for all $0<\eps\leq \eps^\ast$.  

  Since the controller satisfies the \Gconds\ and the closed-loop system is exponentially stable for all $0<\eps\leq \eps^\ast$, we have from Theorem~\ref{thm:RORPchar} that for any $0<\eps\leq \eps^\ast$ the controller solves the robust output regulation problem.
\end{proof}

\begin{remark}
  \label{rem:stabbyoutfb}
  The controller presented in this section can also be used if the 
  plant is initially unstable but can be stabilized with output feedback,
  i.e., 
there exists
  an admissible feedback element $K_1\in \Lin(Y,U)$ 
such that
the semigroup generated by
  $(A+BK_1(I-DK_1)\inv \CL)\vert_X$ is exponentially stable.
  Indeed, in such a case 
  the controller can be designed for the stabilized system
  $( (A+BK_1(I-DK_1)\inv \CL)\vert_X,B(I-K_1D)\inv,(I-DK_1)\inv \CL,(I-DK_1)\inv D)$.
  This 
  procedure is demonstrated in Section~\ref{sec:heatex}.
\end{remark}

\begin{remark}
  \label{rem:realcontroller}
If the plant is \keyterm{real} in the sense that $P(-i\gw)=\conj{P(i\gw)}$ for all $\gw\in\R$, if $Y=\C^p$, $U=\C^m$, and if the exosystem is of the form
\eq{
S = \diag(i\gw_1,-i\gw_1,\ldots,i\gw_q,-i\gw_q,0)\in \C^{(2q+1)\times (2q+1)},
}
then $(\mc{G}_1,\mc{G}_2,K)$ can be chosen to be real matrices.  Indeed, in this case we can choose
\eq{
\mc{G}_1 = \diag\left( G_1^1,\ldots,G_1^q,0_{p\times p} \right),
\qquad  G_1^k =  \pmat{0&\gw_k I_Y \\ -\gw_k I_Y& 0} ,
}
\ieq{
K=\eps (K_0^1,\ldots,K_0^q,K_0^{q+1})
}
where $K_0^k = (\re P(i\gw_k)\pinv, \im P(i\gw_k)\pinv)\in \R^{m\times 2p}$ for $k\in \List{q}$ and $K_0^{p+1}=P(0)\pinv\in\R^{m\times p}$, and finally
\ieq{
\mc{G}_2 = \left( G_2^k \right)_{k=1}^{p+1}
}
where $G_2^k = \bigl({-I_Y\atop 0}\bigr)\in \R^{2p\times p}$ for $k\in \List{q}$ and $G_2^{p+1}=-I_Y\in \R^{p\times p}$.
The controller incorporates a p-copy internal model of the exosystem, and 
if we apply a unitary similarity transformation
\eq{
Q=\diag(Q_0,\ldots,Q_0,I_Y), \qquad 
Q_0 = \frac{1}{\sqrt{2}}
\pmat{I_Y&I_Y \\iI_Y&-iI_Y},
}
then $(Q^\ast \mc{G}_1Q,Q^\ast \mc{G}_2,KQ)$ coincides with the controller constructed in the beginning of this section. From this it follows that there exists $\eps^\ast>0$ such that the closed-loop system is exponentially stable and the real controller solves the robust output regulation problem for all $0<\eps\leq \eps^\ast$.  
\end{remark}

\subsection{Controller With a Reduced Order Internal Model}
\label{sec:ROIMcontr}

In this section we construct a minimal order controller for a version of the robust output regulation problem where the controller is only required to tolerate uncertainties from a given class $\Ops_0$ of admissible perturbations~\cite{PauPoh13a,PauPoh14a}. More precisely, in part~(c) of the robust output regulation problem we only consider perturbations such that $\sysops\in \Ops_0$ and for which the perturbed closed-loop system is exponentially stable.
We again assume that the plant is exponentially stable, the matrix $S$ is diagonal, and we in addition assume that $P(i\gw_k)$ are boundedly invertible for all $k\in \List{q}$.

The class $\Ops_0$ 
in the control problem 
can be chosen freely, but it is assumed that all perturbations
$\sysops$
in $\Ops_0$
are such that (i) the perturbed plant $(\tilde{A},\tilde{B},\tilde{C},\tilde{D})$ is a regular linear system and (ii) $i\gw_k\in \rho(\tilde{A})$ and 
the transfer function
$\tilde{P}(i\gw_k) = \tilde{C}_\Lambda R(i\gw_k,\tilde{A})\tilde{B}+\tilde{D}$ 
is boundedly invertible for all $k\in \List{q}$.
Both of these requirements are in particular satisfied for sufficiently small bounded perturbations of $A$, $B$, $C$, and $D$.
Being given such a class $\Ops_0$, we begin the construction of the controller by defining 
\eq{
\mc{S}_k =
\Span 
\bigl\{
\tilde{P}(i\gw_k)\inv (\tilde{C}R(i\gw_k,\tilde{A})\tilde{E}e_k+\tilde{F}e_k)
\,\bigm|\,
\sysops\in \Ops_0
\bigr\}
\subset U
}
for $k\in \List{q}$, where $(e_k)_{k=1}^q$ is the Euclidean basis of $W=\C^q$.
We further define $p_k = \dim \mc{S}_k$. 
The controller that we construct contains a reduced order internal model 
where the number of copies of each of the frequencies $i\gw_k$ of the exosystem   is exactly~$p_k$.
It should be noted that
this controller
differs from the minimial order controller with a full internal model only in the situation where $p_k<\dim Y$ for at least one $k\in \List{q}$.

Define
$Z = Y_1\times \cdots \times Y_q $ where $Y_k = \C^{p_k}$ if $p_k<\dim Y$ and $Y_k=Y$ if $p_k=\dim Y$ or $p_k=\infty$. We choose
\eq{
\mc{G}_1 = \diag \bigl( i\gw_1 I_{Y_1},\ldots,i\gw_q I_{Y_q} \bigr)\in \Lin(Z),
\qquad K = \eps K_0 = \eps\bigl(K_0^1,\ldots,K_0^q\bigr)\in \Lin(Z,U)
}
where $\eps>0$ and $K_0^k\in \Lin(Y_k,U)$ are such that
\eq{
K_0^k = 
\left\{
\begin{array}{ll}
  (u_k^1,\ldots,u_k^{p_k}) &\mbox{if}~~ p_k<\dim Y\\[.7ex]
  P(i\gw_k)\inv &\mbox{if}~~ p_k=\dim Y ~ \mbox{or} ~ p_k=\infty
\end{array}
\right.
}
where $\set{u_k^l}_{l=1}^{p_k}\subset U$ is a basis of the subspace $\mc{S}_k$.
Finally, 
we choose 
\eq{
\mc{G}_2
= (-(P(i\gw_k)K_0^k)^\ast)_{k=1}^q
\in \Lin(Y,Z).
}
For those $k\in \List{q}$ for which $p_k=\dim Y$ or $p_k=\infty$ we then have $\mc{G}_2^k=-I_Y$.

\begin{theorem}
  \label{thm:ContrROIM}
  Assume that the semigroup $T(t)$ generated by $A$ is exponentially stable, $S = \diag(i\gw_1,\ldots,i\gw_q)$,
  and $P(i\gw_k)$ are boundedly invertible for all $k\in \List{q}$.
 Then there exists $\eps^\ast>0$ such that for any $0<\eps\leq \eps^\ast$ the controller with the above choices of parameters solves the robust output regulation problem for the class $\Ops_0$
  of perturbations.
\end{theorem}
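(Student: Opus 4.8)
The plan is to follow the two-part template of the proof of Theorem~\ref{thm:SimpleContrMain}: first establish exponential stability of the closed loop, and then verify the regulation property directly for the nominal plant and for every perturbation in $\Ops_0$. For stability I would reuse the earlier construction almost verbatim. Setting $H=(H_1,\ldots,H_q)\in\Lin(Z,X)$ with $H_k=R(i\gw_k,A_{-1})BK_0^k$, the same computation as before yields the two structural identities $H\mc{G}_1=A_{-1}H+BK_0$ and $\CL H+DK_0=-\mc{G}_2^\ast$ (the $k$-th component of $\CL H+DK_0$ is again $P(i\gw_k)K_0^k=-(\mc{G}_2^k)^\ast$). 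Hence the similarity transformation $Q_e=\pmat{-I&\eps H\\0&I}$ brings $A_e$ into exactly the same triangular-plus-$O(\eps^2)$ form, and exponential stability reduces, as before, to the exponential stability of $A-\eps H\mc{G}_2\CL$ (immediate for small $\eps$ from the stability of $A$ and admissibility of $\CL$) together with that of $\mc{G}_1-\eps\mc{G}_2\mc{G}_2^\ast$.

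The one genuinely new point in the stability argument is that $\mc{G}_2^k=-(P(i\gw_k)K_0^k)^\ast$ is now merely surjective rather than invertible when $p_k<\dim Y$, so that the naive dissipation estimate $\re\iprod{(\mc{G}_1-\eps\mc{G}_2\mc{G}_2^\ast)z}{z}=-\eps\norm{\mc{G}_2^\ast z}^2$ no longer controls $\norm{z}$ and cannot by itself force exponential decay. What rescues the argument is that the diagonal blocks $\mc{G}_2^k(\mc{G}_2^k)^\ast=(P(i\gw_k)K_0^k)^\ast(P(i\gw_k)K_0^k)$ are boundedly positive definite on $Y_k$, because $P(i\gw_k)K_0^k$ is injective: its columns $P(i\gw_k)u_k^l$ are linearly independent, $P(i\gw_k)$ being boundedly invertible. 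Equivalently, $\ker(i\gw_k-\mc{G}_1)\cap\ker(\mc{G}_2^\ast)=\set{0}$ for every $k$. This detectability-type condition is what drives Lemma~\ref{lem:G1fbstab}, and it plays here the role that invertibility of $\mc{G}_2^k$ played in Theorem~\ref{thm:SimpleContrMain}; since $\gs(\mc{G}_1)=\set{i\gw_k}_{k=1}^q$ consists of finitely many separated points, the off-diagonal coupling of $\mc{G}_2\mc{G}_2^\ast$ perturbs the spectrum near each $i\gw_k$ only at order $\eps^2$ while the coercive diagonal blocks move it into the open left half-plane at order $\eps$, so I would conclude exponential stability of $\mc{G}_1-\eps\mc{G}_2\mc{G}_2^\ast$ for all small $\eps>0$.

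The regulation property is where the reduced internal model does its work, and I would prove it by exhibiting the solution of the Sylvester equation explicitly. Fix a perturbation $\sysops\in\Ops_0$ for which the closed loop is exponentially stable (the nominal plant being the special case for part~(b)), so that $\tilde\Sigma S=\tilde A_e\tilde\Sigma+\tilde B_e$ has a unique solution $\tilde\Sigma=(\tilde\Pi,\tilde\Gamma)$. By the error decomposition recalled in the proof of Theorem~\ref{thm:RORPchar}, exponential decay of the regulation error is equivalent to $\tilde C_e\tilde\Sigma+\tilde D_e=0$, that is, to $r_k:=\tilde P(i\gw_k)\bigl(\KL\tilde\Gamma e_k\bigr)+\tilde C R(i\gw_k,\tilde A)\tilde E e_k+\tilde F e_k=0$ for each $k\in\List{q}$. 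The key observation is that the \emph{required} steady-state input $\tilde u_k:=-\tilde P(i\gw_k)\inv(\tilde C R(i\gw_k,\tilde A)\tilde E e_k+\tilde F e_k)$ lies in $\ran(K_0^k)$ by the very construction of $\mc{S}_k$ (this is the definition of $\mc{S}_k=\ran(K_0^k)$ when $p_k<\dim Y$, and is automatic otherwise). Choosing $\tilde z_k\in Y_k$ with $K_0^k\tilde z_k=\tilde u_k$ and letting $\tilde\Gamma e_k$ be the vector whose $k$-th block is $\tilde z_k/\eps$ and whose other blocks vanish, the diagonal form of $\mc{G}_1$ gives $\KL\tilde\Gamma e_k=\eps K_0^k(\tilde z_k/\eps)=\tilde u_k$, whence $r_k=\tilde P(i\gw_k)\tilde u_k+\tilde C R(i\gw_k,\tilde A)\tilde E e_k+\tilde F e_k=0$; the off-diagonal block equations $(i\gw_k-i\gw_j)(\tilde\Gamma e_k)_j=\mc{G}_2^j r_k$ then reduce to $0=0$, and $\tilde\Pi e_k=R(i\gw_k,\tilde A)(\tilde B\tilde u_k+\tilde E e_k)$ satisfies the first block. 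By uniqueness of $\tilde\Sigma$ this is the solution, so $\tilde C_e\tilde\Sigma+\tilde D_e=0$; since $T_e(t)$ is exponentially stable and $C_{e\Lambda}$ is admissible, $e^{\tilde\ga\cdot}e(\cdot)\in\Lp[2](0,\infty;Y)$ for some $\tilde\ga>0$, which is precisely part~(c) (and, for the nominal plant, part~(b)).

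I expect the main obstacle to be the stability of $\mc{G}_1-\eps\mc{G}_2\mc{G}_2^\ast$ in this degenerate, non-square setting: unlike in Theorem~\ref{thm:SimpleContrMain}, dissipativity alone fails, and one must genuinely use the separation of the frequencies $i\gw_k$ together with the block-diagonal coercivity to control the $\eps$-expansion of the spectrum, and to do so at the operator level rather than for matrices when some $p_k=\infty$ and $Y_k=Y$ is infinite-dimensional. The regulation step is by contrast structurally clean, since membership of the required input in $\mc{S}_k$ is exactly what the definition of $\mc{S}_k$ guarantees; the only routine care needed there is to check the $\Lambda$-extension and domain conditions $\ran(\tilde\Sigma)\subset\Dom(\tilde A_e)$ that accompany the Sylvester equation, which follow as in the regular-linear-system theory underlying Theorem~\ref{thm:RORPchar}.
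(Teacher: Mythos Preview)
Your proposal is correct, and the overall architecture (stability via the similarity $Q_e$, then regulation via the Sylvester equation) matches the paper's. The two halves differ from the paper in execution, however, and in opposite directions.

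For the regulation step, the paper is shorter than you: it does not construct $\tilde\Sigma$ explicitly but simply verifies, for each $k$ and each $\sysops\in\Ops_0$, the existence of $z\in\ker(i\gw_k-\mc{G}_1)$ with $\tilde P(i\gw_k)Kz=y_k$, and then invokes \cite[Thm.~5.1]{PauPoh14a} to conclude. Your explicit construction of $(\tilde\Pi,\tilde\Gamma)$ is precisely what that cited theorem does internally, so you are unrolling it in this special (diagonal $S$) case. What you gain is self-containedness; what you lose is brevity.

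For the stability step, the paper is \emph{shorter} than strictly warranted: it says only ``exactly as in the proof of Theorem~\ref{thm:SimpleContrMain}'', but you are right that Lemma~\ref{lem:G1fbstab} as stated does not apply verbatim, since $\mc{G}_2^k\in\Lin(Y,Y_k)$ is not square when $p_k<\dim Y$. Your identification of the key surviving property --- that $(\mc{G}_2^k)^\ast=-P(i\gw_k)K_0^k$ is injective with closed range, equivalently $\ker(i\gw_k-\mc{G}_1)\cap\ker(\mc{G}_2^\ast)=\{0\}$ --- is exactly the right observation. But your proposed spectral-perturbation argument (diagonal blocks at order $\eps$, off-diagonal coupling at order $\eps^2$) is more delicate than needed, particularly when some $Y_k$ are infinite-dimensional. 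The cleaner route is to notice that the \emph{proof} of Lemma~\ref{lem:G1fbstab} never uses full invertibility of $G_2^n$: the only place it enters is the estimate $\norm{G_2^\ast z_k^2}\geq c\norm{z_k^2}$ for $z_k^2\in\ker(i\gw_n-G_1)$, and this needs only that $(G_2^n)^\ast$ be bounded below. With that remark the lemma's argument goes through unchanged for $\mc{G}_1=\diag(i\gw_k I_{Y_k})$ on $Y_1\times\cdots\times Y_q$, and yields exponential stability of $\mc{G}_1-\eps\mc{G}_2\mc{G}_2^\ast$ for \emph{all} $\eps>0$, not only small $\eps$.
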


\begin{proof}
Let $\eps>0$, $\sysops\in \Ops_0$ and $k\in \List{q}$ 
be arbitrary
and denote $y_k = -\tilde{C} R(i\gw_k, \tilde{A}) \tilde{E}e_k -\tilde{F}e_k$.
We begin by showing that
  there exists $z \in  \ker(i\gw_k-\mc{G}_1)$ such that
    \ieq{
    \tilde{P}(i\gw_k)K z = y_k.
    }
If $k$ is such that $Y_k = Y$, we can choose $z=(z_1,\ldots,z_q)\in Z$ such that $z_l=0$ for $l\neq k$ and 
$z_k =\frac{1}{\eps} P(i\gw_k)\tilde{P}(i\gw_k)\inv y_k$.
Then clearly $z\in \ker(i\gw_k-\mc{G}_1)$ and $\tilde{P}(i\gw_k)Kz = \eps \tilde{P}(i\gw_k)K_0^kz_k= \tilde{P}(i\gw_k)\tilde{P}(i\gw_k)\inv y_k=y_k$.
It remains to consider the situation
$p_k<\dim Y$.
  Since $\tilde{P}(i\gw_k)\inv y_k\in \mc{S}_k$ by definition, and since $\set{u_k^1,\ldots,u_k^{p_k}}$ is a basis of $\mc{S}_k$, there exist $\set{\ga_l}_{l=1}^{p_k}\subset \C$ such that 
  \eq{
  \tilde{P}(i\gw_k)\inv y_k = \sum_{l=1}^{p_k} \ga_l u_k^l.
  }
Choose $z=(z_1,\ldots,z_q)$ such that $z_l=0$ for $l\neq k$ and $z_k= \frac{1}{\eps} (\ga_l)_{l=1}^{p_k}\in Y_k=\C^{p_k}$. Then clearly $z\in \ker(i\gw_k-\mc{G}_1)$ and
  \eq{
  \tilde{P}(i\gw_k) Kz 
  =\eps \tilde{P}(i\gw_k) K_0^k z_k
  =\tilde{P}(i\gw_k) \sum_{l=1}^{p_k}\ga_l u_k^l 
  =\tilde{P}(i\gw_k)\tilde{P}(i\gw_k)\inv y_k
  =y_k.
  }
  Since $k\in \List{q}$ and $\sysops\in \Ops_0$ were arbitrary, we have from~\cite[Thm. 5.1]{PauPoh14a} that the controller solves the robust output regulation problem for the class~$\Ops_0$ of perturbations if the closed-loop system is exponentially stable
  (see the proof of Theorem~\ref{thm:RORPchar}).

It remains to show that there exists $\eps^\ast>0$ such that for every $0<\eps\leq \eps^\ast$ the closed-loop system is exponentially stable.
However, if we define $H=(H_1,\ldots,H_q)\in \Lin(Z,X)$ by choosing $H_k = R(i\gw_k,A_{-1})BK_0^k$, then $H$ is the solution of the Sylvester equation $H \mc{G}_1 = A_{-1}H+BK_0$, and the stability closed-loop system can be established exactly as in the proof of Theorem~\ref{thm:SimpleContrMain}, since we again have $\CL H+DK_0=-\mc{G}_2^\ast$.
\end{proof}

\section{The New Robust Controller Structure}
\label{sec:ContrOne}

In this section we introduce the new controller structure for robust output regulation of linear systems. 
This controller has the natural structure for the inclusion of a p-copy internal model into the dynamics of the controller.
The construction of the controller 
is completed in steps. 
Some of the choices of the parameters require
certain properties from the associated operators, and these properties are verified in Theorem~\ref{thm:ContrOneMain}.
We begin by assuming that $\dim Y<\infty$. The case of an infinite-dimensional output space is considered separately for a diagonal exosystem in Section~\ref{sec:ContrOneDiagExo}.

\medskip

\noindent\textbf{Step $\bm{1}^\circ$:}
We begin by choosing the state space of the controller as $Z=\ZI\times X$, and choosing the general structure of the operators $(\mc{G}_1,\mc{G}_2,K)$ as
  \eq{
  \mc{G}_{1}=\pmat{G_1&G_2(\CL +D\KtL )\\0&\hspace{-1.5ex}A_{-1}+B\KtL+L(\CL +D\KtL)}, 
\qquad  \mc{G}_{2}=\pmat{G_2\\L},
  }
  and $K = (K_1, \; -\KtL)$. The operator $G_1$ is the internal model of the exosystem~\eqref{eq:exointro}, and it is defined by choosing
  $\ZI = Y^{n_1}\times \cdots \times Y^{n_q}$, and
\eq{
G_1 = \diag \bigl( \JY[1],\ldots,\JY[q] \bigr)\in \Lin(\ZI),
\qquad K_1 = \bigl(K_1^1,\ldots,K_1^q\bigr)
\in \Lin(\ZI,U).
}
Here for each $k\in \List{q}$ we have
\eqn{
\label{eq:JYdef}
\JY =
\pmat{i\gw_k I_Y&I_Y\\&i\gw_k I_Y&\ddots\\&&\ddots&I_Y\\ &&&i\gw_k I_Y}
\in \Lin(Y^{n_k}),
}
and $ K_1^k = \bigl(K_1^{k1},\ldots,K_1^{kn_k}\bigr)\in \Lin(Y^{n_k},U)$, where
$n_k\in \N$ is the dimension of the Jordan block in $S$ associated to the eigenvalue $i\gw_k\in \gs(S)$.
 We choose the components $K_1^{k1}\in \Lin(Y,U)$ of each $K_1^k$ in such a way that $P(i\gw_k)K_1^{k1}\in \Lin(Y)$ are boundedly invertible. This is possible since $P(i\gw_k)$ are surjective by assumption, and can be achieved, for example, by choosing $K_1^{k1}=P(i\gw_k)\pinv$ for all $k\in \List{q}$.
  For $l\geq 2$
  we can choose
  $K_1^{kl}$
   freely, e.g., $K_1^{kl}=0$.

\medskip

\noindent\textbf{Step $\bm{2}^\circ$:} 
By Assumption~\ref{ass:stabdetect} we can choose
 $K_2\in\Lin(X_1,U)$ and $L_1\in\Lin(Y,X)$ in such a way that $(A_{-1}+B\KtL)\vert_X$ (here $\KtL$ is the $\Lambda$-extension of $K_2$) and $A+L_1\CL $ generate exponentially stable semigroups. For $\gl\in \rho(A+L_1\CL)$ we define
 \eq{
 P_L(\gl) = \CL R(\gl,A_{-1}+L_1\CL)(B+L_1 D) + D.
 }
The identity $P_L(i\gw_k) = (I-\CL R(i\gw_k,A)L_1)\inv P(i\gw_k)$ and the choice of $K_1$ imply that  $P_L(i\gw_k)K_1^{1k}\in \Lin(Y)$ are boundedly invertible for all $k\in \List{q}$.
    The domain of the operator $\mc{G}_1$ is chosen as
    \eq{
    \Dom(\mc{G}_1) 
    &= \Setm{(z_1,x_1)\in \ZI\times X_B}{ A_{-1}x_1+B\KtL x_1\in X}.
    }

\medskip

\noindent\textbf{Step $\bm{3}^\circ$:} 
We define  $H = (H_1,H_2,\ldots,H_q)\in \Lin(\ZI,X)$ where $H_k=(H_k^1,H_k^2,\ldots,H_k^{n_k})\in \Lin(Y^{n_k},X)$ and
\eq{
H_k^l = \sum_{j=1}^l (-1)^{l-j} R(i\gw_k,A_{-1}+L_1\CL)^{l+1-j} (B+L_1D) K_1^{kj}.
}
We have from~\cite[Sec. 7]{Wei94} that $(A+L_1\CL,B+L_1D,\CL,D)$ is a regular linear system, and the resolvent identity in~\cite[Prop. 6.6]{Wei94}
implies $\ran(H)\subset X_B$. We can therefore define $C_1=\CL H+DK_1\in \Lin(\ZI,Y)$.

\medskip

\noindent\textbf{Step $\bm{4}^\circ$:}
We choose  $G_2\in \Lin(Y,\ZI)$ in such a way that the semigroup generated by $G_1+G_2C_1\in \Lin(\ZI)$ is exponentially stable (i.e., the matrix is Hurwitz). The detectability of the pair $(C_1,G_1)$ is proved in Theorem~\ref{thm:ContrOneMain} below. Finally, we define $L=L_1 + HG_2\in \Lin(Y,Z)$.

\begin{theorem}
  \label{thm:ContrOneMain}
Assume $\dim Y<\infty$. The controller with the above choices of parameters solves the robust output regulation problem.

  In particular, the controller $(\mc{G}_1,\mc{G}_2,K)$ has the following properties:
  \begin{itemize} 
     \item[\textup{(i)}] The operator $\mc{G}_1$ generates a semigroup on $Z$ and the controller $(\mc{G}_1,\mc{G}_2,K)$ 
       incorporates a p-copy internal model of the exosystem.
\item[\textup{(ii)}] The operator $H$ is the unique solution of the Sylvester equation
  \eqn{
  \label{eq:RORPCLsysstabSyl}
  HG_1 =(A_{-1}+L_1\CL )H+(B+L_1D)K_1,
  } 
  and the pair $(C_1,G_1)$ where $C_1 = \CL H + DK_1\in \Lin(\ZI,Y)$ is exponentially detectable.  
    \item[\textup{(iii)}] The semigroup generated by 
      $A_e$ is exponentially stable.
  \end{itemize}
\end{theorem}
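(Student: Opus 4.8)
The plan is to deduce that the controller solves the robust output regulation problem from items (i) and (iii) via Theorem~\ref{thm:RORPchar}: once we know the controller incorporates a p-copy internal model and stabilizes the closed loop exponentially, that theorem (in the case $\dim Y<\infty$) immediately gives robust output regulation. I would establish the three properties in the order (ii), (i), (iii), because the Sylvester equation and the detectability of $(C_1,G_1)$ both feed into the stability analysis.

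For (ii) I would verify the Sylvester equation $HG_1=(A_{-1}+L_1\CL)H+(B+L_1D)K_1$ blockwise. Writing $G_1=\diag(\JY[1],\ldots,\JY[q])$ and reading off the columns of the $k$-th block reduces the equation to the recursion $(i\gw_k-(A_{-1}+L_1\CL))H_k^l=(B+L_1D)K_1^{kl}-H_k^{l-1}$ with $H_k^0=0$, whose solution telescopes to precisely the formula for $H_k^l$ recorded in Step~$3^\circ$; the inclusion $\ran(H)\subset X_B$ noted there makes $C_1=\CL H+DK_1$ well defined. Uniqueness is automatic since $\gs(G_1)\subset i\R$ is disjoint from $\gs(A+L_1\CL)$ (open left half-plane), so the associated Sylvester operator is boundedly invertible. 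For detectability I would use the finite-dimensionality of $\ZI$ and the Hautus test: it suffices to show $\ker(i\gw_k-G_1)\cap\ker(C_1)=\set{0}$ for each $k$. Every eigenvector of $G_1$ at $i\gw_k$ lies in the $k$-th block and has the form $(y,0,\ldots,0)$, and a direct computation gives $C_1(y,0,\ldots,0)=(\CL H_k^1+DK_1^{k1})y=P_L(i\gw_k)K_1^{k1}y$; since $P_L(i\gw_k)K_1^{k1}$ is boundedly invertible by Step~$2^\circ$ this forces $y=0$, so $(C_1,G_1)$ is detectable and the choice of $G_2$ in Step~$4^\circ$ is justified.

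For (i), semigroup generation follows from the block-upper-triangular form of $\mc{G}_1$, whose diagonal entries are the finite-dimensional $G_1$ and the operator $A_{-1}+B\KtL+L(\CL+D\KtL)$; the latter generates a semigroup because it is a bounded output injection $L(\CL+D\KtL)$ of the exponentially stable generator $(A_{-1}+B\KtL)\vert_X$, where $\CL+D\KtL$ is admissible as the observation operator of the state-feedback regular linear system, and the off-diagonal coupling $G_2(\CL+D\KtL)$ is admissible with $G_2$ bounded. The p-copy property is the clean part: for each $k$, any Jordan chain of $G_1$ at $i\gw_k$ lifts to a Jordan chain of $\mc{G}_1$ through the vectors $(\phi,0)\in\ZI\times X$, since $\mc{G}_1(\phi,0)=(G_1\phi,0)$ and $(\phi,0)\in\Dom(\mc{G}_1)$. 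As $G_1$ supplies $\dim Y$ independent chains of length $n_k$ at each $i\gw_k$, so does $\mc{G}_1$, which is exactly Definition~\ref{def:pcopy}.

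For (iii), the exponential stability of $A_e$, I would use two bounded (hence stability-preserving) similarity transformations applied to the closed-loop state $(x,z_1,x_1)$. The first change of variables $\zeta=x+x_1$ decouples the plant block with the stable generator $(A_{-1}+B\KtL)\vert_X$ on the diagonal, leaving a $2\times2$ block in $(z_1,\zeta)$; the second change $\theta=\zeta-Hz_1$ uses the Sylvester equation of (ii) (together with $C_1=\CL H+DK_1$ and $L=L_1+HG_2$) to annihilate the lower-left coupling, producing a block-upper-triangular operator whose diagonal blocks are $(A_{-1}+B\KtL)\vert_X$, $G_1+G_2C_1$, and $(A_{-1}+L_1\CL)\vert_X$, with off-diagonal couplings built from $G_2\CL$ and $B\KtL$. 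All three diagonal blocks generate exponentially stable semigroups — the first and third by Step~$2^\circ$, the middle by Step~$4^\circ$ — so only the passage from the diagonal blocks to the triangular operator remains. This last step is the main obstacle: the coupling operators $\CL$ and $\KtL$ are unbounded, and concluding exponential stability requires their admissibility for the relevant diagonal generators, in particular the preservation of admissibility of $\KtL$ and $\CL$ under the output-injection perturbation $A\rightsquigarrow A+L_1\CL$; granted this, the cascade estimate is the same as the one at the end of the proof of Theorem~\ref{thm:SimpleContrMain}, with the domain bookkeeping for the two transformations handled exactly as there. Combining (i), (iii), and Theorem~\ref{thm:RORPchar} then yields the claim.
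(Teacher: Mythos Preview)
Your plan is correct and matches the paper's proof closely. Parts (i) and (ii) are argued exactly as in the paper (block-wise verification of the Sylvester equation, Hautus test using $C_1$ restricted to $\ker(i\gw_k-G_1)$ equals $P_L(i\gw_k)K_1^{k1}$, and the p-copy property read off from the upper-triangular structure of $\mc{G}_1$). For (iii) the paper uses a single similarity $Q_e=\pmatsmall{I&0&0\\0&I&0\\-I&H&-I}$; your two successive changes $\zeta=x+x_1$ and $\theta=\zeta-Hz_1$ compose (up to a harmless sign flip in the third coordinate) to this same $Q_e$, and lead to the same block upper-triangular operator with diagonal blocks $(A_{-1}+B\KtL)\vert_X$, $G_1+G_2C_1$, $A+L_1\CL$ and off-diagonal couplings $B(K_1-\KtL H)$, $B\KtL$, $G_2\CL$. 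The admissibility issue you flag is precisely the point the paper invokes (admissibility of $B$ for $(A_{-1}+B\KtL)\vert_X$ and of $\CL,\KtL$ for $A+L_1\CL$, with $K_1-\KtL H$ and $G_2$ bounded) before concluding via Theorem~\ref{thm:RORPchar}.
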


\begin{proof} 
  We begin by proving part (i).
  We have that
  \eq{
  \mc{G}_{1}
  &=\pmat{G_1&0\\0&A_{-1}}
  +\pmat{0&G_2\\B&L}\pmat{I&0\\D&I}\pmat{0&\KtL \\0&\CL}
  }
  where $\pmatsmall{0&G_2\\B&L}$ and $\pmatsmall{0&\KtL\\0&\CL}$ 
  are admissible with respect to $\pmatsmall{G_1&0\\0&A}$. 
  It is now straightforward to use the results in~\cite[Sec. 7]{Wei94} to verify that $\mc{G}_1$
with the proposed domain
  generates a strongly continuous semigroup on $Z=\ZI\times X$.
  Moreover, it is easy to show that $\KL=K = (K_1,\;-\KtL)$.
For every $k\in \List{q}$ the matrix $G_1$ clearly satisfies $\dim\ker(i\gw_k-G_1)=\dim Y=p$
and it has exactly $p$ 
Jordan blocks
of size $n_k\times n_k$
associated to $i\gw_k$. 
Due to the triangular structure of $\mc{G}_1$, the controller 
therefore
incorporates a p-copy internal model of the exosystem.

We will now show that $H$ is the solution of the Sylvester equation~\eqref{eq:RORPCLsysstabSyl}.
Denote $A_L=A_{-1}+L_1\CL$ and $B_L=B+L_1D$ for brevity.
Due to the structure of the operator $G_1$ it is straigtforward to see that an operator $H\in \Lin(\ZI,X)$ such that $\ran(H)\subset \Dom(\CL)$ is the solution of $HG_1 = A_LH+B_LK_1$ if and only if for all $k\in \List{q}$ we have
\eq{
(i\gw_k-A_L)H_k^1 &= B_LK_1^{k1}\\
(i\gw_k-A_L)H_k^2 + H_k^1 &= B_LK_1^{k2}\\
&~\;\vdots\\
(i\gw_k-A_L)H_k^{n_k} + H_k^{n_k-1} &= B_LK_1^{kn_k},
}
where $H=(H_1,\ldots,H_q)$, and $H_k=(H_k^1,\ldots,H_k^{n_k})\in \Lin(Y^{n_k},X)$.
For each $k\in \List{q}$ the above system of equations has a unique solution
\eq{
H_k^l = \sum_{j=1}^l (-1)^{l-j} R(i\gw_k,A_L)^{l+1-j} B_L K_1^{kj}.  
}
Thus $H$ defined in Step~3$^\circ$ is the unique solution of~\eqref{eq:RORPCLsysstabSyl}.

We will now show that $(C_1,G_1)$ is exponentially detectable.  We can do this by showing that for all $k\in\List{q}$ and $z\in \ker(i\gw_k-G_1)$ with $z\neq 0$ we have $C_1 z\neq 0$~\cite[Thm. 6.2-5]{Kai80book}. To this end, let $k\in\List{q}$ and $z\in \ker(i\gw_k-G_1)$ such that $z\neq 0$ be arbitrary. From the structure of $G_1$ we have that
$z=(z_1,\ldots,z_q)$ where $z_l=0$ for $l\neq k$, and further $z_k=(z_k^1,0,\ldots,0)\in Y^{n_k}$.
Using $H_k^1=R(i\gw_k,A_L)B_LK_1^{k1}$ 
we see that
\eq{
C_1 z 
= \CL Hz + DK_1z
= \CL H_kz_k + DK_1^kz_k
= \CL H_k^1z_k^1 + DK_1^{k1}z_k^1
= P_L(i\gw_k)K_1^{k1}z_k^1 \neq 0
}
since $z_k^1\neq 0$, and since we chose $K_1^{k1}$ in such a way that $P(i\gw_k)K_1^{k1}$ and $P_L(i\gw_k)K_1^{k1}$ are boundedly invertible.

It remains to 
show that the closed-loop system is exponentially stable. 
With the chosen controller $(\mc{G}_1,\mc{G}_2,K)$ 
the operator $A_e$ becomes
\eq{
A_{e}
=\pmat{A_{-1}&BK_{1}&-B\KtL\\G_2\CL &G_1+G_2DK_1&G_2\CL \\L\CL &LDK_{1}&A_{-1}+B\KtL+L\CL }
}
with domain $\Dom(A_e)$ equal to
\eq{
\Dom(A_e) = 
\biggl\{ 
(x,z_1,x_1)\in  X_B\times \ZI\times X_B
~\biggm|
~
\Bigl\{
\begin{array}{l}
  A_{-1}x+BK_1z_1-B\KtL x_1\in X\\
  A_{-1}x_1+B\KtL x_1\in X
\end{array}
\biggr\}.
} 
If we choose a similarity transform $Q_{e}\in \Lin(X\times \ZI\times X)$ 
\eq{
Q_{e}
=\pmat{I&0&0\\0&I&0\\-I&H&-I}
=  Q_{e}\inv ,
}
we can define $\hat{A}_e = Q_{e}A_{e}Q_{e}\inv$ on $X\times \ZI\times X$. If we denote $x_e = (x,z_1,x_1)\in X\times \ZI \times X$ and use $\ran(H)\subset X_B$, the domain of the operator $\hat{A}_e$ satisfies
\eq{
\Dom(\hat{A}_e) 
&= \Setm{ x_e \in X\times \ZI\times X}{Q_e\inv x_e \in \Dom(A_e)}\\
&= \Setm{ x_e \in 
X_B\times \ZI\times X_B
}{Q_e\inv x_e \in \Dom(A_e)}.
}
For $x_e = (x,z_1,x_1) \in 
X_B\times \ZI\times X_B$
we thus have
\eq{
Q_e\inv x_e \in \Dom(A_e) \quad
 \Leftrightarrow  \quad & 
\left\{
\begin{array}{l}
  A_{-1}x+BK_1z_1-B\KtL(-x+Hz_1-x_1)\in X\\
  (A_{-1}+B\KtL)(-x+Hz_1-x_1)\in X
\end{array}
\right.\\
\Leftrightarrow \quad & 
\left\{
\begin{array}{l}
  (A_{-1}+B\KtL)x+B(K_1-\KtL H)z_1+B\KtL x_1\in X\\
   BK_1z_1 + A_{-1}Hz_1-A_{-1}x_1 \in X
\end{array}
\right.
\\
\Leftrightarrow \quad & 
\left\{
\begin{array}{l}
  (A_{-1}+B\KtL )x+B(K_1-\KtL H)z_1+B\KtL x_1\in X\\
   x_1 \in \Dom(A)
\end{array}
\right.
}
since equation~\eqref{eq:RORPCLsysstabSyl} implies
$A_{-1}Hz_1+BK_1z =HG_1z_1-L_1(\CL H+DK_1)z_1 \in X$. The above conditions also imply $x\in X_B$, and thus 
\eq{
\Dom(\hat{A}_e) 
= \bigl\{ x_e \in  X\times \ZI\times \Dom(A)
\,\bigm|\,(A_{-1}+B\KtL )x+
B(K_1-\KtL H)z_1+B\KtL x_1\in X
\bigr\}.
} 
For $x_e=(x,z_1,x_1)\in  \Dom(\hat{A}_e)$ 
a direct computation using $L=L_1+G_2H$, $C_1=\CL H+DK_1$, and $HG_1z_1 =(A_{-1}+L_1\CL )Hz_1+(B+L_1D)K_1z_1$ yields
\eq{
\hat{A}_{e}x_e
&=Q_{e}A_{e} \pmat{x\\z_1\\-x+Hz_1-x_1}
=\pmat{(A_{-1} + B\KtL )x +B(K_{1}-\KtL H)z_1 +B\KtL  x_1\\(G_1+G_2(\CL H+DK_1))z_1 - G_2\CL  x_1\\ (A_{-1} +L_1\CL)  x_1}\\
&=\pmat{A_{-1} + B\KtL  & B(K_{1}-\KtL H) &B\KtL  \\0& G_1+G_2C_1 &- G_2\CL  \\ 0&0& A +L_1\CL} \pmat{x\\z_1\\   x_1}
}
The operator $G_2 \in \Lin(Y,\ZI)$ was chosen in such a way that $G_1+G_2C_1\in \Lin(\ZI)$ is Hurwitz. 
Since $(A_{-1}+B\KtL )\vert_X$ and $A+L_1\CL $  generate exponentially stable semigroups,
since $B$ is an admissible input operator for $(A+B\KtL )\vert_X$, $\CL$ and $\KtL$ are  admissible input operators for $A+L_1\CL$, and $K_1-\KtL H$ and $G_2$ are bounded,
we have that the semigroup generated by $\hat{A}_e$ is exponentially stable, and due to similarity, the same is true for $A_e$. We thus conclude that the closed-loop system is exponentially stable.

Because the controller incorporates a p-copy internal model of the exosystem and the closed-loop system is exponentially stable, we have from Theorem~\ref{thm:RORPchar} that the controller solves the robust output regulation problem.
\end{proof}

\subsection{Controller for a Diagonal Exosystem}
\label{sec:ContrOneDiagExo}

In this section we consider the situation where the output space $Y$ is allowed to be infinite-dimensional and the matrix $S$ in the exosystem is diagonal. We will show that
in this situation
the robust output regulation problem can be solved with particularly simple choice for the parameter $G_2$ of the controller. For a diagonal matrix $S=\diag(i\gw_1,\ldots,i\gw_q)$ we choose $\ZI = Y^q$ and the internal model $(G_1,K_1)$ of the exosystem is defined as
\eq{
G_1 = \diag(i\gw_1 I_Y,\ldots,i\gw_q I_Y),
\qquad K_1 = (K_1^1,\ldots,K_1^q)
}
where  $K_1^k$ are chosen in such a way that $P(i\gw_k)K_1^k$ are boundedly invertible for all $k\in \List{q}$.
The following is the main result of this section.

\begin{theorem}
  \label{thm:ContrOneDiagExo}
  Assume $S=\diag(i\gw_1,\ldots,i\gw_q)$. If the other parameters of the controller are chosen as in the beginning of Section~\textup{\ref{sec:ContrOne}} and if we choose
  \eq{
G_2 
= (G_2^k)_{k=1}^q
=   (-(P_L(i\gw_k)K_1^k)^\ast)_{k=1}^q \in \Lin(Y,\ZI),
  }
then the controller solves the robust output regulation problem.  

If we choose $K_1^k = P_L(i\gw_k)\pinv =  P(i\gw_k)\pinv (I-\CL R(i\gw_k,A)L_1)$ for all $k$, then $G_2^k=-I_Y$ for all $k$.
\end{theorem}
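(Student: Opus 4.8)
The plan is to reduce Theorem~\ref{thm:ContrOneDiagExo} to the already-proved Theorem~\ref{thm:ContrOneMain} by verifying that the specific diagonal choice of $G_2$ makes $G_1 + G_2 C_1$ Hurwitz, since every other ingredient of the construction (the internal model structure, the Sylvester solution $H$, the detectability of $(C_1,G_1)$, and the closed-loop similarity reduction) is either identical to the general case or degenerates trivially when $S$ is diagonal. The key observation is that for a diagonal exosystem each Jordan block has size $n_k = 1$, so $\ZI = Y^q$, the operator $G_1 = \diag(i\gw_1 I_Y,\ldots,i\gw_q I_Y)$, and the Sylvester solution collapses to $H_k = R(i\gw_k, A_L) B_L K_1^k$ with $A_L = A_{-1}+L_1\CL$ and $B_L = B + L_1 D$. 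A direct computation of $C_1 = \CL H + D K_1$ then gives the diagonal components $C_1^k = \CL R(i\gw_k,A_L) B_L K_1^k + D K_1^k = P_L(i\gw_k) K_1^k$, so that $C_1 = (P_L(i\gw_k)K_1^k)_{k=1}^q$ and hence $C_1 = -G_2^\ast$ by the chosen form of $G_2$.

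The \textbf{central step} is to show that $G_1 + G_2 C_1$ generates an exponentially stable semigroup on $\ZI = Y^q$. With the identity $C_1 = -G_2^\ast$ in hand, this operator becomes $G_1 - G_2 G_2^\ast$, and its $k$-th diagonal block is $i\gw_k I_Y - (P_L(i\gw_k)K_1^k)^\ast(P_L(i\gw_k)K_1^k)$ while the off-diagonal coupling enters only through the $G_2 G_2^\ast$ term, which is bounded and negative semidefinite. The cleanest route is to invoke Lemma~\ref{lem:G1fbstab} (the same lemma used in the proof of Theorem~\ref{thm:SimpleContrMain}), applied with the invertible operators $(P_L(i\gw_k)K_1^k)^\ast$ in the role of $\mc{G}_2^k$: since $P(i\gw_k)K_1^k$ is boundedly invertible and $P_L(i\gw_k) = (I-\CL R(i\gw_k,A)L_1)\inv P(i\gw_k)$, each $P_L(i\gw_k)K_1^k$ is boundedly invertible, so $\sqrt{\eps}\,G_2^k$ has the required invertibility and the diagonal-plus-dissipative structure yields exponential stability. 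I expect this to be where the genuine mathematical content lies, because when $Y$ is infinite-dimensional the Hurwitz/spectrum-determined-growth argument available in the finite-dimensional Step~4$^\circ$ no longer applies, and one must instead exploit the explicit dissipative form $G_1 - G_2 G_2^\ast$ together with the invertibility of the feedback gains.

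Having established that $G_1 + G_2 C_1$ is exponentially stable, I would note that the remaining parts of Theorem~\ref{thm:ContrOneMain} go through verbatim for the diagonal exosystem: the p-copy internal model property follows from $\dim\ker(i\gw_k - G_1) = \dim Y$ with $n_k = 1$ (so no nontrivial Jordan-chain condition is needed, and the argument does not require $\dim Y < \infty$), the detectability of $(C_1,G_1)$ is immediate from $C_1 z \neq 0$ for $0 \neq z \in \ker(i\gw_k - G_1)$ since $C_1^k = P_L(i\gw_k)K_1^k$ is injective, and the closed-loop similarity transformation $Q_e$ and the resulting upper-triangular form of $\hat{A}_e$ are unchanged. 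Thus the exponential stability of $A_e$ follows exactly as before from the exponential stability of the three diagonal blocks $(A_{-1}+B\KtL)\vert_X$, $G_1 + G_2 C_1$, and $A + L_1\CL$ together with admissibility of the relevant input operators, and an appeal to Theorem~\ref{thm:RORPchar} completes the proof.

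Finally, for the last sentence of the statement I would simply substitute $K_1^k = P_L(i\gw_k)\pinv$ into the definition of $G_2^k$: then $P_L(i\gw_k)K_1^k = P_L(i\gw_k)P_L(i\gw_k)\pinv = I_Y$ (using that $P_L(i\gw_k)$ is boundedly invertible, so the pseudoinverse is a genuine inverse), whence $G_2^k = -(I_Y)^\ast = -I_Y$, and the identity $P_L(i\gw_k)\pinv = P(i\gw_k)\pinv(I - \CL R(i\gw_k,A)L_1)$ records the explicit form of the gain in terms of the original transfer function.
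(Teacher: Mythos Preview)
Your plan for computing $C_1 = -G_2^\ast$, invoking Lemma~\ref{lem:G1fbstab} to obtain exponential stability of $G_1 + G_2 C_1 = G_1 - G_2 G_2^\ast$, and carrying over the similarity reduction for $A_e$ from Theorem~\ref{thm:ContrOneMain} is correct and matches the paper. The treatment of the final sentence is also fine.

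There is, however, a genuine gap in your handling of the internal model property. The point of Theorem~\ref{thm:ContrOneDiagExo} is precisely that $Y$ is allowed to be infinite-dimensional, and in that setting the p-copy internal model is \emph{not} available: Definition~\ref{def:pcopy} is stated only for $\dim Y<\infty$, and the p-copy clause in Theorem~\ref{thm:RORPchar} is explicitly conditioned on $\dim Y<\infty$. Your parenthetical claim that ``the argument does not require $\dim Y<\infty$'' because $n_k=1$ is therefore not enough; even if the kernel condition $\dim\ker(i\gw_k-\mc{G}_1)\geq \dim Y$ makes formal sense, you have no theorem in the paper that converts it into robust regulation when $Y$ is infinite-dimensional. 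What is required is to verify the \Gconds\ for $(\mc{G}_1,\mc{G}_2)$ directly.

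The paper does this in two steps. First it checks that the pair $\bigl(\pmatsmall{G_1&0\\0&A_{-1}+B\KtL},\mc{G}_2\bigr)$ satisfies the \Gconds, which is easy because the invertibility of each $G_2^k=-(P_L(i\gw_k)K_1^k)^\ast$ gives both $\ker(\mc{G}_2)=\set{0}$ and $\ran(i\gw_k-\cdot)\cap\ran(\mc{G}_2)=\set{0}$ from the $k$-th component. Then it observes that $\mc{G}_1$ differs from this block-diagonal operator by the rank-one-type perturbation $\mc{G}_2\,(0,\;\CL+D\KtL)$, and since $\ker(i\gw_k-\pmatsmall{G_1&0\\0&A_{-1}+B\KtL})\subset \ZI\times\set{0}\subset\ker((0,\;\CL+D\KtL))$, Lemma~\ref{lem:Gcondsinvariance} transfers the \Gconds\ to $(\mc{G}_1,\mc{G}_2)$. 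This step, and the use of Lemma~\ref{lem:Gcondsinvariance}, is the missing ingredient in your proposal.
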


\begin{proof}
  Since $n_k=1$ for all $k\in \List{q}$, we have $H=(H_1,\ldots,H_q)\in \Lin(\ZI,X)$, where $H_k=R(i\gw_k,A_{-1}+L_1\CL)(B+L_1D)K_1^k$. Because of this, the operator $C_1 = (C_1^1,\ldots,C_1^q)$ satisfies
  \eq{
  C_1^k  
  = \CL H_k+DK_1^k 
  = P_L(i\gw_k)K_1^k
  }
  for all $k\in \List{q}$, which shows that
   $G_2 = -C_1^\ast$. The last claim of the theorem follows immediately from
  $P_L(i\gw_k)=(I-\CL R(i\gw_k,A)L_1)\inv P(i\gw_k)$. The same identity and the fact that $K_1^k$ were chosen so that $P(i\gw_k)K_1^k$ are boundedly invertible imply that the components $G_2^k$ of $G_2$ are boundedly invertible for all $k\in \List{q}$.
  We thus have from Lemma~\ref{lem:G1fbstab} that the semigroup generated by $G_1+G_2C_1=G_1-G_2 G_2^\ast$ is exponentially stable.
  The exponential stability of the closed-loop system can now be shown exactly as in the proof of Theorem~\ref{thm:ContrTwoMain}.

  Due to the fact that $Y$ may be infinite-dimensional, we cannot use the concept of p-copy internal model. Instead, we will verify that the controller satisfies the \Gconds. For this we will in particular use Lemma~\ref{lem:Gcondsinvariance}.

  Since $S$ is diagonal, the condition~\eqref{eq:Gconds3} is trivially satisfied.
  The components $G_2^k = -(P_L(i\gw_k)K_1^k)^\ast$ of 
  $G_2=(G_2^k)_{k=1}^q$ are boundedly invertible for all $k\in \List{q}$.
This implies $\ker(G_2)=\set{0}$, and also further shows that $\ker(\mc{G}_2)=\set{0}$.
Moreover, if for some $k\in\List{q}$ the elements $(z,x)\in Z$, $(w,v)\in \Dom(\mc{G}_1)$ with $w=(w_k)_{k=1}^q\in \ZI$, and $y\in Y$ are such that
\eq{
\pmat{z\\x}
=\pmat{i\gw_k-G_1&0\\0&\hspace{-.4cm} i\gw_k-(A_{-1}+B\KtL)} \pmat{w\\v} = \pmat{G_2\\L}y,
}
then we in particular have
$z=(i\gw_k-G_1)w=G_2y$ and $G_2^ky=(i\gw_k-i\gw_k)w_k=0$. The invertibility of $G_2^k$ implies $y=0$ and $(z,x)=\mc{G}_2 y=0$. Since $k\in\List{q}$ was arbitrary, this shows that the operators $(\pmatsmall{G_1&0\\0&A_{-1}+B\KtL},\mc{G}_2)$ satisfy the \Gconds.
Since
  \eq{
  \mc{G}_{1}
  =
\pmat{G_1&G_2(\CL +D\KtL)\\0&\hspace{-.7ex}A_{-1}+B\KtL+L(\CL +D\KtL)}
  =\pmat{G_1&0\\0&\hspace{-.9ex}A_{-1}+B\KtL}
  +\pmat{G_2\\L}\pmat{0&\hspace{-.5ex}\CL +D\KtL}
  }
  where for any $k\in\List{q}$ we have $\ker( i\gw_k-\pmatsmall{G_1&0\\0&A_{-1}+B\KtL})\subset  \ZI\times \set{0}\subset \ker( (0,\CL+D\KtL))$, Lemma~\ref{lem:Gcondsinvariance} shows that the operators $(\mc{G}_1,\mc{G}_2)$ satisfy the
  \Gconds\ 
  as well. 

  Since the controller satisfies the \Gconds\ and the closed-loop system is exponentially stable, we have from Theorem~\ref{thm:RORPchar} that the controller solves the robust output regulation problem.
\end{proof}

\subsection{Controller with a Reduced Order Internal Model}

It was shown in~\cite{Pau15a} that the triangular structure used in this section is ideal for controllers with reduced order internal models.  Indeed, if the internal model $(G_1,K_1)$ is replaced with an appropriate reduced order internal model, the controller will solve the robust output regulation problem for a given class $\Ops_0$ of perturbations.  As the final result in this section we present a generalization of the controller introduced in~\cite{Pau15a} for regular linear systems with diagonal exosystems.
For this purpose we again assume that $P(i\gw_k)$ are invertible for all $k\in\List{q}$.

Let $\Ops_0$ be a class of admissible perturbations.
Similarly as in Section~\ref{sec:ROIMcontr}
we define
$\ZI = Y_1\times \cdots \times Y_q $, and 
\eq{
G_1 = \diag \bigl( i\gw_1 I_{Y_1},\ldots,i\gw_q I_{Y_q} \bigr)
\in \Lin(\ZI),
\qquad K_1 =\bigl(K_1^1,\ldots,K_1^q\bigr)
\in \Lin(\ZI,U)
}
where $K_1^k\in \Lin(Y_k,U)$ are such that
\eq{
K_1^k = 
\left\{
\begin{array}{ll}
  (u_k^1,\ldots,u_k^{p_k}) &\mbox{if}~~ p_k<\dim Y\\[.7ex]
  P(i\gw_k)\inv &\mbox{if}~~ p_k=\dim Y ~ \mbox{or} ~ p_k=\infty
\end{array}
\right.
}
in the notation of Section~\textup{\ref{sec:ROIMcontr}}.
 Moreover, we define
  \ieq{
  G_2 = (-(P_L(i\gw_k)K_1^k)^\ast)_{k=1}^q \in \Lin(Y,Z).
  }
  The rest of the parameters of the controller $(\mc{G}_1,\mc{G}_2,K)$ are chosen as in the beginning of Section~\textup{\ref{sec:ContrTwo}}.

\begin{theorem}
  \label{thm:ContrOneROIM}
  Assume
  $S = \diag(i\gw_1,\ldots,i\gw_q)$ and $P(i\gw_k)$ are invertible for all $k\in \List{q}$. 
  Then
the controller with the above choices of parameters solves the robust output regulation problem for the class $\Ops_0$ of perturbations.
\end{theorem}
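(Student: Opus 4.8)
The plan is to follow the two-step template of the proofs of Theorem~\ref{thm:ContrROIM} and Theorem~\ref{thm:ContrOneDiagExo}: first verify the reduced order internal model condition that, by~\cite[Thm. 5.1]{PauPoh14a}, characterises robust regulation over the class $\Ops_0$ (given closed-loop stability), and then prove exponential stability of the closed loop. Since $S$ is diagonal we have $n_k=1$ for all $k$, so $H=(H_1,\ldots,H_q)$ with $H_k=R(i\gw_k,A_{-1}+L_1\CL)(B+L_1D)K_1^k$ solves the Sylvester equation~\eqref{eq:RORPCLsysstabSyl}, and the components of $C_1=\CL H+DK_1$ are $C_1^k=P_L(i\gw_k)K_1^k$, exactly as in Theorem~\ref{thm:ContrOneDiagExo}. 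By construction $G_2=(-(P_L(i\gw_k)K_1^k)^\ast)_{k=1}^q=-C_1^\ast$.

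For the internal model condition I only need to exhibit, for each $k\in\List{q}$ and each $\sysops\in\Ops_0$, an element $z\in\ker(i\gw_k-\mc{G}_1)$ with $\tilde P(i\gw_k)Kz=y_k$, where $y_k=-\tilde C R(i\gw_k,\tilde A)\tilde E e_k-\tilde F e_k$. Because $\mc{G}_1$ is block upper triangular and its off-diagonal block $G_2(\CL+D\KtL)$ acts only on the $X$-component, every $z=(w,0)$ with $w\in\ker(i\gw_k-G_1)$ lies in $\Dom(\mc{G}_1)$ and satisfies $(i\gw_k-\mc{G}_1)z=((i\gw_k-G_1)w,0)=0$, while $Kz=(K_1,-\KtL)(w,0)=K_1w=K_1^kw_k$, the last equality because the diagonal structure of $G_1$ forces all blocks of $w$ except the $k$-th to vanish. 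It then remains to choose $w_k\in Y_k$ with $\tilde P(i\gw_k)K_1^kw_k=y_k$, which is done verbatim as in Theorem~\ref{thm:ContrROIM}: if $Y_k=Y$ set $w_k=P(i\gw_k)\tilde P(i\gw_k)\inv y_k$, so that $K_1^kw_k=P(i\gw_k)\inv w_k=\tilde P(i\gw_k)\inv y_k$; if $p_k<\dim Y$, use $\tilde P(i\gw_k)\inv y_k\in\mc{S}_k$ and the basis $\{u_k^l\}_{l=1}^{p_k}$ of $\mc{S}_k$ to write $\tilde P(i\gw_k)\inv y_k=\sum_l\ga_l u_k^l$ and take $w_k=(\ga_l)_{l=1}^{p_k}$. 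In both cases $\tilde P(i\gw_k)\inv$ exists because $\sysops\in\Ops_0$, and $\tilde P(i\gw_k)K_1^kw_k=y_k$ follows.

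For stability I intend to reuse the similarity transform and triangularisation of $A_e$ from the proof of Theorem~\ref{thm:ContrOneMain} unchanged: the transformed generator $\hat A_e$ is block upper triangular with diagonal blocks $(A_{-1}+B\KtL)\vert_X$, $G_1+G_2C_1$ and $A+L_1\CL$ and bounded or admissible off-diagonal couplings, so the whole question reduces to exponential stability of $G_1+G_2C_1=G_1-G_2G_2^\ast$. The inputs to Lemma~\ref{lem:G1fbstab} are the blocks $G_2^k=-(P_L(i\gw_k)K_1^k)^\ast=-(C_1^k)^\ast$, so I must check the requisite nondegeneracy of $C_1^k=P_L(i\gw_k)K_1^k$. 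Using $P_L(i\gw_k)=(I-\CL R(i\gw_k,A)L_1)\inv P(i\gw_k)$, the operator $P_L(i\gw_k)$ is boundedly invertible since $P(i\gw_k)$ is invertible by assumption; and $K_1^k$ is injective, being $P(i\gw_k)\inv$ when $Y_k=Y$ and the map with linearly independent columns $u_k^l$ when $p_k<\dim Y$. Hence each $C_1^k$ is bounded below, which is what Lemma~\ref{lem:G1fbstab} requires, and exponential stability of $G_1-G_2G_2^\ast$, and thus of $A_e$, follows.

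The main obstacle is exactly this last stability step. Unlike in Theorem~\ref{thm:ContrOneDiagExo}, where every $G_2^k$ is boundedly invertible, in the reduced order case the blocks with $p_k<\dim Y$ produce $G_2^k$ that is merely surjective, equivalently $C_1^k$ only bounded below rather than invertible, and the coupling present in $G_2G_2^\ast$ rules out a naive block-diagonal argument; I must therefore invoke Lemma~\ref{lem:G1fbstab} under this weaker hypothesis on the observation blocks, noting that the finite dimensionality of $Y_k=\C^{p_k}$ makes ``bounded below'' coincide with injectivity of $C_1^k$, which the invertibility of $P_L(i\gw_k)$ supplies. Once the internal model condition holds for all $k$ and all $\sysops\in\Ops_0$ and the closed loop is exponentially stable, \cite[Thm. 5.1]{PauPoh14a} (see the proof of Theorem~\ref{thm:RORPchar}) concludes that the controller solves the robust output regulation problem for the class $\Ops_0$ of perturbations.
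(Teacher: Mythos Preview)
Your proposal is correct and follows essentially the same two-step approach as the paper's proof: embed the reduced-order construction of Theorem~\ref{thm:ContrROIM} as $(z,0)\in\ker(i\gw_k-\mc{G}_1)$ to satisfy the condition from~\cite[Thm.~5.1]{PauPoh14a}, then establish exponential closed-loop stability via the triangularisation of Theorem~\ref{thm:ContrOneMain} and the stability of $G_1-G_2G_2^\ast$. You are in fact more careful than the paper on one point: the paper simply asserts that stability ``can be shown exactly as in the proof of Theorem~\ref{thm:ContrOneDiagExo}'', which in turn cites Lemma~\ref{lem:G1fbstab}, but that lemma is stated for $G_1\in\Lin(Y^q)$ with equal blocks and boundedly invertible $G_2^k$, whereas in the reduced-order setting the blocks $Y_k$ vary and $G_2^k\in\Lin(Y,Y_k)$ is only surjective. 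You correctly observe that what the proof of Lemma~\ref{lem:G1fbstab} actually uses at the critical step is that $(G_2^n)^\ast$ is bounded below on the $n$-th block, and that this follows here from the injectivity of $C_1^n=P_L(i\gw_n)K_1^n$ (invertible $P_L(i\gw_n)$ composed with the injective $K_1^n$), together with finite-dimensionality of $Y_n$ when $p_n<\dim Y$.
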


\begin{proof}
If $\sysops\in \Ops_0$ and $k\in \List{q}$, and if
we choose $z$ as in the proof of Theorem~\ref{thm:ContrROIM} (for $\eps=1$), then it is easy to see that $\tilde{P}(i\gw_k)K \left( z\atop 0 \right)=y_k$ and $\left( z\atop 0 \right)\in \ker(i\gw_k-\mc{G}_1)$. By~\cite[Thm. 5.1]{PauPoh14a} the controller solves the robust output regulation problem for the class $\Ops_0$ of perturbations provided that the closed-loop system is exponentially stable. The stability of the closed-loop system can be shown exactly as in the proof of Theorem~\ref{thm:ContrOneDiagExo}.
\end{proof}

\section{The Observer-Based Robust Controller}
\label{sec:ContrTwo}

The observer-based robust controller structure presented in this section is based on the controller H\"{a}m\"{a}l\"{a}inen and Pohjolainen~\cite{HamPoh10} for systems with bounded input and output operators. The construction of the controller is again completed in steps and its properties are given in Theorem~\ref{thm:ContrTwoMain}. For this controller structure it is necessary to assume that the spaces $U$ and $Y$ are isomorphic. We begin by assuming that the plant has the same finite number of inputs and outputs, that is, $U=Y=\C^p$. The case of an infinite-dimensional output space is again considered separately for a diagonal exosystem in Theorem~\ref{thm:ContrTwoDiagExo}.

\medskip

\noindent\textbf{Step $\bm{1}^\circ$:}
We begin by choosing the state space of the controller as $Z=\ZI\times X$, and choosing 
  \eq{
  \mc{G}_{1}=\pmat{G_1&0\\(B+LD)K_1&A_{-1}+BK_2+L(\CL +DK_2)}, 
  }
$\mc{G}_{2}=\left(G_2\atop-L\right)$,
  and $K = (K_1, \; \KtotL)$. The operators $(G_1,G_2)$ make up the internal model of the exosystem~\eqref{eq:exointro}, and they are defined by choosing
  $\ZI = Y^{n_1}\times \cdots \times Y^{n_q}$, and
\eq{
G_1 = \diag \bigl( \JY[1],\ldots,\JY[q] \bigr)\in \Lin(\ZI),
\qquad 
G_2 =
(G_2^k)_{k=1}^q
\in \Lin(Y,\ZI).
}
Here 
$\JY$ are as in~\eqref{eq:JYdef} and
$G_2^k 
= (G_2^{kl})_{l=1}^{n_k}
\in \Lin(Y,Y^{n_k})$ for all $k\in \List{q}$,
where $n_k\in \N$ is the dimension of the Jordan block in $S$ associated to the eigenvalue $i\gw_k\in \gs(S)$. We choose the components $G_2^{kn_k}\in \Lin(Y)$ of each $G_2^k$ to be boundedly invertible (e.g., it is possible to choose $G_2^{kn_k}=I_Y$ for every $k\in \List{q}$).  

\medskip

\noindent\textbf{Step $\bm{2}^\circ$:} 
By Assumption~\ref{ass:stabdetect} 
we can choose $K_{21}\in\Lin(X_1,U)$ and $L\in\Lin(Y,X)$  in such a way that $(A_{-1}+B\KtoL)\vert_X$ (here $\KtoL$ is the $\Lambda$-extension of $K_{21}$) and $A+L\CL $ generate exponentially stable semigroups. 
For $\gl\in \rho( A_{-1}+B\KtoL )$ we define
 \eq{
P_K(\gl) = (\CL + D\KtoL ) R(\gl,A_{-1}+B\KtoL )B + D.
 }
Since $P(i\gw_k)$ were assumed to be surjective for all $k\in \List{q}$ and since $U=Y=\C^q$, the identity $P_K(i\gw_k) = P(i\gw_k)(I-\KtoL R(i\gw_k,A_{-1})B)\inv$
implies that $P_K(i\gw_k)$ are boundedly invertible for all $k\in \List{q}$.

\medskip

\noindent\textbf{Step $\bm{3}^\circ$:} 
We define an operator $H: \Dom(H)\subset X_{-1}\to \ZI$
in such a way that
\ieq{
H = (H_k)_{k=1}^q
}
and
\ieq{
H_k = (H_k^l)_{l=1}^{n_k},
}
where
\eq{
H_k^l = \sum_{j=l}^{n_k} (-1)^{j-l} G_2^{kj}(\CL\hspace{-.3ex}+\hspace{-.5ex}D\KtoL )R(i\gw_k,A_{-1}\hspace{-.3ex}+ \hspace{-.3ex}B\KtoL )^{j+1-l} .
}
Since we have from~\cite[Sec. 7]{Wei94} that $(A+B\KtoL ,B,\CL+D\KtoL ,D)$ is a regular linear system
and $X_B\subset \Dom(\CL)\cap \Dom(\KtoL)$,
it is immediate that $H\in \Lin(X,\ZI)$ and $\ran(B)\subset \Dom(H)$, and we can thus
define $B_1= HB+G_2D\in \Lin(U,\ZI)$.

\medskip

\noindent\textbf{Step $\bm{4}^\circ$:}
We choose the operator $K_1\in \Lin(\ZI,U)$ in such a way that the semigroup generated by $G_1+B_1K_1\in \Lin(\ZI)$ is exponentially stable (i.e., the matrix is Hurwitz). The stabilizability of the pair $(G_1,B_1)$ is shown in Theorem~\ref{thm:ContrTwoMain} below. Finally, we define $\KtotL =\KtoL  + K_1H\in \Lin(X,U)$ and choose the domain of the operator $\mc{G}_1$ as
\eq{
\Dom(\mc{G}_1) 
    =
    \bigl\{
(z_1,x_1)\in \ZI\times X_B
\,\bigm|\,
    A_{-1}x_1+B(K_1z_1+\KtotL x_1)\in X
    \bigr\}.
}

\begin{theorem}
  \label{thm:ContrTwoMain}
Assume $U = Y = \C^p$.
  The controller with the above choices of parameters solves the robust output regulation problem.

  In particular, the controller $(\mc{G}_1,\mc{G}_2,K)$ has the following properties:
  \begin{itemize} 
     \item[\textup{(i)}] The operator $\mc{G}_1$ generates a semigroup on $Z$ and the controller $(\mc{G}_1,\mc{G}_2,K)$ satisfies the \Gconds\ in Definition~\textup{\ref{def:Gconds}}.   
\item[\textup{(ii)}] The operator $H$ is the unique solution of the Sylvester equation
  \eqn{
  \label{eq:RORPCLsysstabSylContrTwo}
  G_1H =H(A_{-1}+B\KtoL )+G_2(\CL + D\KtoL ) 
  } 
  on $\Sylspace$.
  Moreover, $(G_1,B_1)$ where $B_1 =  HB + G_2D\in \Lin(U,\ZI)$ is exponentially stabilizable.  
    \item[\textup{(iii)}] The semigroup generated by 
      $A_e$ is exponentially stable.
  \end{itemize}
\end{theorem}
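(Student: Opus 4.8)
The plan is to mirror the proof of Theorem~\ref{thm:ContrOneMain}, of which this result is the dual: inputs and outputs, controllability and observability, and the two Sylvester equations are interchanged, and the lower-triangular $\mc{G}_1$ plays the role of the earlier upper-triangular operator. Write $A_K=A_{-1}+B\KtoL$ and $C_K=\CL+D\KtoL$, so that $\mc{G}_1=\bigl(\begin{smallmatrix}G_1&0\\(B+LD)K_1&A_K+LC_K\end{smallmatrix}\bigr)$. For part~(i), that $\mc{G}_1$ generates a semigroup follows as in Theorem~\ref{thm:ContrOneMain}: express $\mc{G}_1$ as $\diag(G_1,A_{-1})$ plus a feedback-type perturbation built from operators admissible with respect to it and apply~\cite[Sec.~7]{Wei94}; the same computation gives $\KL=K=(K_1,\KtotL)$. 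Condition~\eqref{eq:Gconds2} is immediate, since $\mc{G}_2y=0$ forces $G_2^{kn_k}y=0$ and $G_2^{kn_k}$ was chosen boundedly invertible. For~\eqref{eq:Gconds1}, an element of $\ran(i\gw_k-\mc{G}_1)\cap\ran(\mc{G}_2)$ satisfies $(i\gw_k-G_1)w=G_2y$ in the top block; in the $k$-th Jordan block $i\gw_k-\JY$ is a nilpotent shift whose range consists exactly of the vectors with vanishing last coordinate, so the last row forces $G_2^{kn_k}y=0$, hence $y=0$ and the intersection is $\set{0}$.

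The delicate step, which I expect to be the main obstacle, is~\eqref{eq:Gconds3}. Since $\mc{G}_1$ is block lower-triangular, $\set{0}\times X$ is invariant and $\mc{G}_1$ acts there as the observer operator $A_K+LC_K$; nothing in Steps~$1^\circ$--$4^\circ$ forces $i\gw_k$ into the resolvent set of this operator, so $\mc{G}_1$ may carry generalized eigenvectors at $i\gw_k$ coming from the lower block in addition to the $p$ chains of length $n_k$ furnished by $G_1$. As~\eqref{eq:Gconds3} amounts to requiring that every Jordan block of $\mc{G}_1$ at $i\gw_k$ have length at least $n_k$, I must show that any such extra eigenvector is absorbed at the foot of a $G_1$-chain rather than surviving as a short block. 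The mechanism is the coupling $(B+LD)K_1$: applied through $(B+LD)K_1^{k1}$ to the eigenvector at the bottom of a $G_1$-chain it produces a vector in $X$ that prolongs the chain into the lower block. Concretely I would take $\xi=(w,v)\in\ker(i\gw_k-\mc{G}_1)^{n_k-1}$, note that $w\in\ker(i\gw_k-G_1)^{n_k-1}=\ran(i\gw_k-G_1)$ because all $G_1$-blocks have length exactly $n_k$, solve $(i\gw_k-G_1)\tilde w=w$, and then use the freedom $\tilde w\mapsto\tilde w+\ker(i\gw_k-G_1)$ to arrange $v+(B+LD)K_1\tilde w\in\ran\bigl(i\gw_k-(A_K+LC_K)\bigr)$, which exhibits $\xi\in\ran(i\gw_k-\mc{G}_1)$. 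Showing that the available corrections $(B+LD)K_1^{k1}Y$ span a complement of $\ran\bigl(i\gw_k-(A_K+LC_K)\bigr)$---using the bounded invertibility of $G_2^{kn_k}$ and $P_K(i\gw_k)$---is the heart of the matter.

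Part~(ii) is the dual of the corresponding step in Theorem~\ref{thm:ContrOneMain}. The Sylvester equation~\eqref{eq:RORPCLsysstabSylContrTwo} decomposes, through the Jordan structure of $G_1$, into $i\gw_k H_k^{n_k}=H_k^{n_k}A_K+G_2^{kn_k}C_K$ together with $i\gw_kH_k^l+H_k^{l+1}=H_k^lA_K+G_2^{kl}C_K$ for $l<n_k$, whose unique solution is the operator written down in Step~$3^\circ$; uniqueness follows from $\gs(G_1)\cap\gs(A_K)=\emptyset$. For exponential stabilizability of $(G_1,B_1)$ I would apply the Hautus test~\cite[Thm.~6.2-5]{Kai80book}: a nonzero left eigenvector $\psi$ of $G_1$ at $i\gw_k$ is supported on the last coordinate $\psi^{n_k}$ of the $k$-th block, and using $H_k^{n_k}=G_2^{kn_k}C_KR(i\gw_k,A_K)$ one finds $\psi^\ast B_1=\psi^\ast(HB+G_2D)=(\psi^{n_k})^\ast G_2^{kn_k}P_K(i\gw_k)$, which is nonzero because $G_2^{kn_k}$ and $P_K(i\gw_k)$ are boundedly invertible.

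Finally, for part~(iii) I would compute $A_e$ explicitly and apply a similarity transform built from $H$, dual to the transform $Q_e$ of Theorem~\ref{thm:ContrOneMain}, to bring $A_e$ into block-triangular form with diagonal blocks $(A_{-1}+B\KtoL)\vert_X$, $G_1+B_1K_1$, and $A+L\CL$. These generate exponentially stable semigroups by Steps~$2^\circ$ and~$4^\circ$, and since $B$ is admissible for $(A_{-1}+B\KtoL)\vert_X$, $\CL$ and $\KtoL$ are admissible for $A+L\CL$, and the off-diagonal terms are bounded, the triangular generator---and hence $A_e$---is exponentially stable. Combining the exponential closed-loop stability with the \Gconds\ established in part~(i), Theorem~\ref{thm:RORPchar} yields robust output regulation.
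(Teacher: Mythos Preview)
Your treatment of parts~(ii) and~(iii), and of conditions~\eqref{eq:Gconds1} and~\eqref{eq:Gconds2} in part~(i), matches the paper's proof. The gap is your direct attack on~\eqref{eq:Gconds3}. You propose to show that the corrections $(B+LD)K_1^{k1}Y$ complement $\ran\bigl(i\gw_k-(A_K+LC_K)\bigr)$ in $X$, but nothing in Steps~$1^\circ$--$4^\circ$ supports this. The operator $K_1$ is merely \emph{some} stabilizing feedback for $(G_1,B_1)$, chosen in Step~$4^\circ$ with no prescribed block structure, so the component $K_1^{k1}$ carries no special property; and the invertibility of $G_2^{kn_k}$ and $P_K(i\gw_k)$ pertains to the input map $B_1$ on $\ZI$, not to the range of $i\gw_k-(A_K+LC_K)$ as an operator on~$X$. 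As you yourself note, $i\gw_k$ need not lie in the resolvent of the observer operator $A_K+LC_K$, and there is no evident mechanism by which a $p$-dimensional subspace of $X$ should fill the resulting cokernel.

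The paper sidesteps this by \emph{reordering} the proof: establish the exponential closed-loop stability of part~(iii) first, and only then verify~\eqref{eq:Gconds3}. Once $A_e$ is exponentially stable, \cite[Lem.~5.7]{PauPoh10} gives the decomposition $Z=\ran(i\gw_k-\mc{G}_1)+\ran(\mc{G}_2)$ for every $k$. Any $(z,x)\in\ker(i\gw_k-\mc{G}_1)^{n_k-1}$ can then be written as $(i\gw_k-\mc{G}_1)(z_1,x_1)+\mc{G}_2 y$; projecting onto the first component yields $z=(i\gw_k-G_1)z_1+G_2y$ with $z\in\ker(i\gw_k-G_1)^{n_k-1}\subset\ran(i\gw_k-G_1)$. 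Since the invertibility of $G_2^{kn_k}$ gives the \emph{direct} sum $\ZI=\ran(i\gw_k-G_1)\oplus\ran(G_2)$, one concludes $G_2y=0$, hence $y=0$, hence $(z,x)\in\ran(i\gw_k-\mc{G}_1)$. The surjectivity you were trying to manufacture in the $X$-component thus comes for free from closed-loop stability, and the invertibility of $G_2^{kn_k}$ is applied on $\ZI$, where it actually acts, rather than on~$X$.
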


\begin{proof}
  The property that $\mc{G}_1$ with the given domain generates a strongly continuous semigroup can be seen analogously as in the proof of Theorem~\ref{thm:ContrOneMain}.

We will now show that $H$ defined in Step~3$^\circ$ is the solution of~\eqref{eq:RORPCLsysstabSylContrTwo}.
  Denote $A_K=A_{-1}+B\KtoL $ and $C_K=\CL+D\KtoL$ for brevity.
  The structure of  $G_1$ implies that an operator 
$H$
  is the solution of $G_1H = HA_K+G_2C_K$ 
  if and only if 
 $H = (H_k)_{k=1}^q$
 with $H_k 
 = (H_k^l)_{l=1}^{n_k}$ for all $k$, 
  and 
  for all $k\in \List{q}$ we have
\eq{
H_k^1(i\gw_k-A_K) + H_k^2&= G_2^{k1}C_K\\
&~\;\vdots\\
H_k^{n_k-1}(i\gw_k-A_K) + H_k^{n_k}  &= G_2^{k,n_k-1}C_K\\
H_k^{n_k}(i\gw_k-A_K)  &= G_2^{kn_k}C_K
}
on $\Sylspace$.
For every $k\in \List{q}$ the above system of equations has a unique solution which is exactly $H_k$ in step~3$^\circ$.

We will now show that the pair $(G_1,B_1)$ with $B_1 = HB+G_2D$ is exponentially stabilizable. This is equivalent to the pair $(B_1^\ast,G_1^\ast)$ being exponentially detectable. Let $k\in \List{q}$ and $z=(z_1,\ldots,z_q)\in\ker(-i\gw_k-G_1^\ast)$. Then $z_l=0$ for $l\neq k$, and $z_k=(0,\ldots,0,z_k^{n_k})$ with $z_k^{n_k}\in Y$. For any $u\in U$ we have
\eq{
\iprod{u}{B_1^\ast z}
&=\iprod{B_1u}{z}
=\iprod{(H_k^{n_k}B+G_2^{kn_k}D)u}{z_k^{n_k}}
=\iprod{G_2^{kn_k}(C_KR(i\gw_k,A_K)B+D)u}{z_k^{n_k}}\\
&=\iprod{u}{(G_2^{kn_k}P_K(i\gw_k))^\ast z_k^{n_k}}
}
which immediately implies that we can have $B_1^\ast z = 0$ only if $z_k^{n_k}=0$ due to the fact that $G_2^{kn_k}$ and $P_K(i\gw_k)$ are invertible. Since this also implies $z=0$ and since $k\in \List{q}$ was arbitrary, we have that the pair $(G_1,B_1)$ is exponentially stabilizable~\cite[Thm. 6.2-5]{Kai80book}.
Because of this it is possible to choose $K_1$ in such a way that $G_1+B_1K_1$ is Hurwitz.

We will now show that the closed-loop system is exponentially stable. 
When the controller $(\mc{G}_1,\mc{G}_2,K)$ is chosen as suggested, we have that
\eq{
A_{e}
=\pmat{A_{-1}&BK_{1}&B\KtotL\\G_2\CL &G_1+G_2DK_1&G_2D\KtotL  \\-L\CL &BK_{1}&A_{-1}+B\KtotL +L\CL }
}
with domain
\eq{
\Dom(A_e) = \biggl\{
(x,z_1,x_1)\in X_B\times \ZI\times X_B \,\biggm|\,
\Bigl\{
\begin{array}{l}
  A_{-1}x+BK_1z_1+B\KtotL x_1\in X\\
BK_1z_1+ (A_{-1}+B\KtotL )x_1\in X
\end{array}
\biggr\}.
} 
If we choose a similarity transform $Q_{e}\in \Lin(X\times \ZI\times X)$ 
\eq{
Q_{e}
=\mbox{\small$\displaystyle\pmat{-I&0&0\\H&I&0\\-I&0&I}$} 
=  Q_{e}\inv ,
}
we can define $\hat{A}_e = Q_{e}A_{e}Q_{e}\inv$ on $X\times \ZI\times X$. 
If we denote $x_e = (x,z_1,x_1)\in X\times \ZI \times X$, we have
\eq{
\Dom(\hat{A}_e) 
&= \Setm{ x_e \in X\times \ZI\times X}{Q_e\inv x_e \in \Dom(A_e)}\\
&= \Setm{ x_e \in X_B\times \ZI\times X_B}{Q_e\inv x_e \in \Dom(A_e)}
}
and for $x_e =(x,z_1,x_1)\in X_B\times \ZI\times X_B$
we have 
\eq{
Q_e\inv x_e \in \Dom(A_e)
\quad \Leftrightarrow  \quad& 
\left\{
\begin{array}{l}
  -A_{-1}x+BK_1(Hx + z_1)+B\KtotL (-x+x_1)\in X\\
  BK_1(Hx+z_1)+ (A_{-1}+B\KtotL )(-x+x_1)\in X
\end{array}
\right.\\
\Leftrightarrow \quad& 
\left\{
\begin{array}{l}
  (A_{-1}+B\KtoL)x -BK_1z_1 - B\KtotL  x_1\in X\\
x_1\in \Dom(A)
\end{array}
\right.
}
where we have used $\KtoL=\KtotL -K_1H$.
Since the above condition also implies $x\in X_B$, the 
domain of $\hat{A}_e$ becomes
\eq{
\Dom(\hat{A}_e) 
=\bigl\{ x_e \in X\times \ZI\times \Dom(A) \,\bigm|\,
(A_{-1}+B\KtoL)x -BK_1z_1 - B\KtotL  x_1\in X \bigr\}.
} 
For any $x_e=(x,z_1,x_1)\in  \Dom(\hat{A}_e)$ 
a direct computation using $\KtotL  = \KtoL+K_1H$, $B_1=HB+G_2D$, and $G_1Hx =H(A_{-1}+B\KtoL)x+G_2(\CL + D\KtoL)x$ yields
\eq{
\hat{A}_{e}x_e 
&=Q_{e}A_{e}\pmat{-x\\H x+z_1\\-x+x_1}
= \pmat{(A_{-1} + B \KtoL)x -BK_1 z_1 -B\KtotL  x_1\\ (G_1  +(H B+G_2DK_1))z_1 +(HB+G_2D)\KtotL x_1 \\ (A_{-1} + L\CL) x_1}\\
&= \pmat{A_{-1} + B \KtoL & -BK_1 & -B\KtotL\\ 0& G_1  +B_1K_1 &B_1\KtotL\\ 0&0& A_{-1} + L\CL } \pmat{x\\z_1\\x_1}
}
The operator $K_1 \in \Lin(\ZI,U)$ was chosen in such a way that $G_1+B_1K_1\in \Lin(\ZI)$ is Hurwitz. Since $(A_{-1}+B\KtoL)\vert_X$ and $A+L\CL $  generate exponentially stable semigroups,
since $B$ and $\KtotL$ are admissible with respect to $(A+B\KtotL )\vert_X$ and $(A_{-1}+L\CL)\vert_X$, respectively, 
and since $K_1$ and $B_1$ are bounded,
 the semigroup generated by $\hat{A}_e$ is exponentially stable, and because of similarity, the same is also true for $A_e$. This concludes that the closed-loop system is exponentially stable.

It remains to show that the controller satisfies the \Gconds.
We begin by showing that $(G_1,G_2)$ satisfy the \Gconds. We have $\ker(G_2)=\set{0}$ since $G_2^{kn_k}$ are boundedly invertible for all $k\in \List{q}$. 
If $z\in \ran(i\gw_k-G_1)\cap \ran(G_2)$ for some $k\in \List{q}$, there exist $z_1 $ and $y$ such that $z=(i\gw_k-G_1)z_1=G_2y$. Here $z=(z_1,\ldots,z_q)$ with $z_k=(z_k^1,\ldots,z_k^{n_k})\in Y^{n_k}$, and structure of $G_1$ implies that necessarily $z_k^{n_k}=0$. On the other hand, we have $0=z_k^{n_k}=G_2^{kn_k}y$, which implies $y=0$ since $G_2^{kn_k}$ is invertible, and thus $z=G_2y=0$. This concludes that $\ran(i\gw_k-G_1)\cap \ran(G_2)=\set{0}$.
Finally, a direct computation can be used to verify that $\ker(i\gw_k-G_1)^{n_k-1}=\setm{z=(z_1,\ldots,z_q)}{z_k^{n_k}=0, ~ z_l=0 ~ \mbox{for} ~ l\neq k}\subset \ran(i\gw_k-G_1)$. This concludes that $(G_1,G_2)$ satisfy the \Gconds. Moreover, the surjectivity of the operators $G_2^{kn_k}$ implies $\ZI=\ran(i\gw_k-G_1)+\ran(G_2)$, and we thus have $\ZI = \ran(i\gw_k-G_1)\oplus \ran(G_2)$.

We will now show that $(\mc{G}_1,\mc{G}_2)$ satisfy the \Gconds. 
The condition $\ker(\mc{G}_2)=\set{0}$ follows immediately from $\ker(G_2)=\set{0}$.
If $(z,x)\in \ran(i\gw_k-\mc{G}_1)\cap \ran(\mc{G}_2)$, there exist $(z_1,x_1)\in \Dom(\mc{G}_1)$ and $y\in Y$ such that 
\eq{
\pmat{z\\x} \hspace{-.3ex} = \hspace{-.3ex} \pmat{i\gw_k-G_1&0\\B_LK_1& \hspace{-.3cm}A_{-1}+B_L\KtotL+L\CL }\hspace{-.5ex} \pmat{z_1\\x_1} = \hspace{-.3ex}\pmat{G_2\\-L}\hspace{-.3ex}y.
}
where 
we have denoted $B_L = B+LD$.
The first line implies $z\in \ker(i\gw_k-G_1)\cap \ran(G_2)=\set{0}$, and  since $\ker(G_2)=\set{0}$, we have $y=0$. Thus $(z,x)=\mc{G}_2y=0$ and we conclude that $ \ran(i\gw_k-\mc{G}_1)\cap \ran(\mc{G}_2) = \set{0}$.

Finally, let $(z,x)\in \ker(i\gw_k- \mc{G}_1)^{n_k-1}$. Since the closed-loop is exponentially stable, we have $Z=\ran(i\gw_k-\mc{G}_1) + \ran(\mc{G}_2)$ for all $k\in\List{q}$~\cite[Lem. 5.7]{PauPoh10}. 
Thus there exist $(z_1,x_1)\in \Dom(\mc{G}_1)$ and $y\in Y$ such that
\eq{
\pmat{z\\x} \hspace{-.3ex}=\hspace{-.3ex} \pmat{i\gw_k-G_1&0\\B_LK_1& \hspace{-.3cm}A_{-1}+B_L\KtotL +L\CL } \pmat{z_1\\x_1} \hspace{-.3ex}+\hspace{-.3ex} \pmat{G_2\\-L}y.
}
We will show that $y=0$, which will conclude that $(z,x)\in \ran(i\gw_k-\mc{G}_1)$. 
From the above equation we
see that $z=(i\gw_k-G_1)z_1+G_2y$. The property $(z,x)\in \ker(i\gw_k-\mc{G}_1)^{n_k-1}$ and the triangular structure of $\mc{G}_1$ imply $z\in \ker(i\gw_k-G_1)^{n_k-1}\subset \ran(i\gw_k-G_1)$. However, since $\ZI=\ran(i\gw_k-G_1)\oplus \ran(G_2)$, in the decomposition $z=(i\gw_k-G_1)z_1+G_2y$ we must then  necessarily have $G_2y=0$, which further implies $y=0$ due to  $\ker(G_2)=\set{0}$.
Since $(z,x)\in \ker(i\gw_k- \mc{G}_1)^{n_k-1}$ was arbitrary, we have that~\eqref{eq:Gconds3} is satisfied.

Since the controller satisfies the \Gconds\ and the closed-loop system is exponentially stable, we have from Theorem~\ref{thm:RORPchar} that the controller solves the robust output regulation problem.
\end{proof}

Finally, we consider the situation where $Y$ is infinite-dimensional and the matrix $S$ in the exosystem is diagonal.
We choose
$\ZI = Y^q$ and
 the internal model in the controller is of the form
\eq{
G_1 = \diag(i\gw_1 I_Y,\ldots, i\gw_q I_Y) 
\in \Lin(\ZI),
\qquad 
G_2 = (G_2^k)_{k=1}^q
\in \Lin(Y,\ZI)
}
where the components $G_2^k$ are chosen to be boundedly invertible for all $k\in\List{q}$.

\begin{theorem}
  \label{thm:ContrTwoDiagExo}
  Assume $S=\diag(i\gw_1,\ldots,i\gw_q)$ and $P(i\gw_k)\in \Lin(U,Y)$ are boundedly invertible for all $k\in \List{q}$. If the other parameters of the controller are chosen as in the beginning of Section~\textup{\ref{sec:ContrTwo}} and if we choose
  \eq{
  K_1 = (-(G_2^1P_K(i\gw_1))^\ast, \ldots,-(G_2^qP_K(i\gw_q))^\ast)\in \Lin(\ZI,U),
  }
then the controller solves the robust output regulation problem.

If
$G_2
=  ( (I-\KtoL R(i\gw_k,A_{-1})B)P(i\gw_k)\inv)_{k=1}^q$,
then
$K_1 = (-I_Y,\ldots,-I_Y)$.
\end{theorem}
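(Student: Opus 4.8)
The plan is to specialize the proof of Theorem~\ref{thm:ContrTwoMain} to the case $n_k=1$ and to turn the abstract stabilization of the internal-model block in Step~$4^\circ$ into an explicit application of Lemma~\ref{lem:G1fbstab}. Writing $A_K=A_{-1}+B\KtoL$ and $C_K=\CL+D\KtoL$, I would first note that for a diagonal $S$ the operator $H$ of Step~$3^\circ$ reduces to $H=(H_k)_{k=1}^q$ with $H_k=G_2^kC_KR(i\gw_k,A_K)$, so that the components of $B_1=HB+G_2D$ are
\[
B_1^k=H_kB+G_2^kD=G_2^k\bigl(C_KR(i\gw_k,A_K)B+D\bigr)=G_2^kP_K(i\gw_k).
\]
Each factor $G_2^k$ is boundedly invertible by hypothesis and each $P_K(i\gw_k)$ is boundedly invertible by Step~$2^\circ$, so every $B_1^k$ is boundedly invertible.

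The second step is to recognise the feedback. With the chosen $K_1=(-(G_2^kP_K(i\gw_k))^\ast)_{k=1}^q=(-(B_1^k)^\ast)_{k=1}^q$ we have $K_1=-B_1^\ast$, hence $G_1+B_1K_1=G_1-B_1B_1^\ast$. Since the components $B_1^k$ are boundedly invertible, Lemma~\ref{lem:G1fbstab} applies and shows that $G_1-B_1B_1^\ast$ generates an exponentially stable semigroup; this realises the Hurwitz property demanded of $G_1+B_1K_1$ in Step~$4^\circ$ in closed form, which is what makes the argument go through when $Y$ is infinite-dimensional and no finite-dimensional pole placement is available. The auxiliary claim is then immediate: when $G_2^k=(I-\KtoL R(i\gw_k,A_{-1})B)P(i\gw_k)\inv$, the identity $P_K(i\gw_k)=P(i\gw_k)(I-\KtoL R(i\gw_k,A_{-1})B)\inv$ from Step~$2^\circ$ gives $B_1^k=G_2^kP_K(i\gw_k)=I_Y$, whence $K_1=(-I_Y,\ldots,-I_Y)$.

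With $G_1+B_1K_1$ exponentially stable I would obtain exponential closed-loop stability exactly as in Theorem~\ref{thm:ContrTwoMain}: the same similarity transform $Q_e$ brings $A_e$ into an upper-triangular form with diagonal generators $(A_{-1}+B\KtoL)\vert_X$, $G_1+B_1K_1$, and $(A_{-1}+L\CL)\vert_X$, all exponentially stable, while the off-diagonal couplings are either admissible with respect to the corresponding diagonal generator (those involving $B$, $\CL$ and $\KtotL$) or bounded. The triangular admissibility bookkeeping already carried out in that proof then yields exponential stability of $\hat A_e$, and hence of $A_e$ by similarity.

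Finally I would verify the \Gconds, which is the route to the conclusion here because $Y$ may be infinite-dimensional and the p-copy definition is unavailable. Since $S$ is diagonal, $n_k=1$ and~\eqref{eq:Gconds3} is vacuous. The bounded invertibility of each $G_2^k$ gives $\ker(G_2)=\set{0}$, hence $\ker(\mc{G}_2)=\set{0}$, and it also yields~\eqref{eq:Gconds1} for $(G_1,G_2)$ because any $z\in\ran(i\gw_k-G_1)$ has vanishing $k$-th component whereas $G_2y$ has $k$-th component $G_2^ky$. The passage to $(\mc{G}_1,\mc{G}_2)$ then follows verbatim from Theorem~\ref{thm:ContrTwoMain}: if $(z,x)=(i\gw_k-\mc{G}_1)(z_1,x_1)=\mc{G}_2y$, the first block equation gives $(i\gw_k-G_1)z_1=G_2y\in\ran(i\gw_k-G_1)\cap\ran(G_2)=\set{0}$, so $y=0$ and $(z,x)=\mc{G}_2y=0$. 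The controller therefore satisfies the \Gconds\ and stabilizes the closed loop exponentially, and Theorem~\ref{thm:RORPchar} delivers the robust output regulation. The only genuinely new work beyond Theorem~\ref{thm:ContrTwoMain} is the closed-form computation $B_1^k=G_2^kP_K(i\gw_k)$, which is also the step I expect to be the crux, since everything downstream --- stabilization via Lemma~\ref{lem:G1fbstab}, closed-loop triangularization, and the \Gconds\ --- is a direct specialization of arguments already in place.
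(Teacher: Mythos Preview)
Your proposal is correct and follows essentially the same route as the paper: compute $B_1^k=G_2^kP_K(i\gw_k)$, recognise $K_1=-B_1^\ast$ so that $G_1+B_1K_1=G_1-B_1B_1^\ast$ is handled by Lemma~\ref{lem:G1fbstab}, and then defer closed-loop stability and the \Gconds\ to the argument of Theorem~\ref{thm:ContrTwoMain}. The paper is slightly more terse---it simply observes that the \Gconds\ verification in Theorem~\ref{thm:ContrTwoMain} already goes through for general Hilbert $Y$ rather than redoing it---but the substance is identical.
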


\begin{proof}
  To show that the controller solves the robust output regulation problem, it is sufficient to show that the closed-loop system is exponentially stable, because the property that the controller satisfies the \Gconds\ and all the other properties considered in the proof of Theorem~\ref{thm:ContrTwoMain} remain valid for a general Hilbert space $Y$.
  
 Since $G_1= \diag (i\gw_k I_Y)_{k=1}^q$, the operator $B_1$ is of the form $B_1=(B_1^k)_{k=1}^q\in \Lin(U,Y^q)$, where for all $k\in \List{q}$ we have
  \eq{
  B_1^k 
  = H_k B +G_2^k D
  =G_2^k P_K(i\gw_k).
  }
 since $H_k=G_2^k(\CL + D\KtoL )R(i\gw_k,A_{-1}+B\KtoL )$.
  This shows that $K_1=-B_1^\ast$.
The last claim of the theorem follows from
$P_K(i\gw_k)=P(i\gw_k)(I-\KtoL R(i\gw_k,A_{-1})B)\inv$,
 and the invertibility of $G_2^k$ and $P(i\gw_k)$ imply that $B_1^k$ are boundedly invertible for all $k\in\List{q}$. We thus have from Lemma~\ref{lem:G1fbstab} that the operator $G_1+B_1K_1=G_1-B_1B_1^\ast$ generates an exponentially stable semigroup. The exponential stability of the closed-loop system can now be shown as in the proof of Theorem~\ref{thm:ContrTwoMain}.
\end{proof}

\section{Robust Control of a 2D Heat Equation}
\label{sec:heatex}

In this section we consider robust output regulation for a two-dimensional heat equation with boundary control and observation. Set-point regulation without the robustness requirement was considered for the same system  in~\cite[Ex. VI.2]{NatGil14}.  

We study the heat equation
\eq{
x_t(\xi,t) = \Delta x(\xi,t),  \qquad x(\xi,0)=x_0(\xi) 
}
on the unit square $\xi=(\xi_1,\xi_2)\in \Omega = [0,1]\times [0,1]$. The control and observation are located 
on the parts $\Gamma_1$ and $\Gamma_2$ of the boundary $\partial \Omega$, where $\Gamma_1 = \setm{\xi=(\xi_1,0)}{0\leq \xi_1\leq 1/2}$ and $\Gamma_2 = \setm{\xi=(\xi_1,1)}{1/2\leq \xi_1\leq 1}$.
We denote $\Gamma_0=\partial \Omega\setminus (\Gamma_1\cup \Gamma_2)$. The boundary control
and the additional boundary conditions are defined as
\eq{
\pd{x}{n}(\xi,t)\vert_{\Gamma_1} = u_1(t), 
~\;
\pd{x}{n}(\xi,t)\vert_{\Gamma_2} = u_2(t), 
~\; 
\pd{x}{n}(\xi,t)\vert_{\Gamma_0} = 0 
}
for $u(t)=(u_1(t),u_2(t))\in U=\C^2$.
The outputs $y(t)=(y_1(t),y_2(t))\in Y=\C^2$ of the system are defined as averages of the value of $x(\xi,t)$ over the parts $\Gamma_1$ and $\Gamma_2$ of the boundary, i.e.,
\eq{
y_1(t)=2\int_0^{1/2}x(\xi_1,0;t)d\xi_1,  \quad 
y_2(t)=2\int_{1/2}^1x(\xi_1,1;t)d\xi_1.
}
We define $A_0=\Delta$ with domain $\Dom(A_0)=\setm{x\in H^2(\Omega)}{\pd{x}{n}=0 ~ \mbox{on} ~ \partial \Omega}$.
We have from~\cite[Cor. 1]{ByrGil02} that with the above control and observation, the heat equation is a regular linear system $(A_0,B,C,D)$ with $D=0$. The system becomes exponentially stable with negative output feedback, $u=-\kappa Cx+ \tilde{u}$
where $\kappa>0$ (cf.~\cite[Ex. VI.2]{NatGil14}). 
We choose $\kappa=1$, and define $A=( (A_0)_{-1}-BC)\vert_X$. 

Our aim is to design a minimal order controller for the stabilized system $(A,B,C,0)$ to achieve robust output tracking of the reference signal $\yref(t) = (-1,\cos(\pi t))$. To this end, we choose 
the exosystem as $W=\C^3$, $S=\diag(-i\pi,0,i\pi)$, $E=0$, and $F=-\left( {0 \atop 1/2} {-1\atop 0 } {0 \atop 1/2} \right)$. The reference signal $\yref(t)$ is then generated with the choice $v_0=(1,1,1)$ of the initial state of the exosystem.

Since $p=\dim Y = 2$,
the internal model and the parameters of the minimal order controller are given by
\eq{
\mc{G}_1 = \diag(-i\pi,-i\pi,0,0,i\pi,i\pi)\in \C^{6\times 6}, 
\qquad 
K =\eps K_0 = \eps \left( K_0^1,K_0^2,K_0^3 \right)\in \C^{2\times 6},
}
where $\eps>0$ and $K_0^k$ are to be chosen in such a way that the matrices $P(-i\pi)K_0^1$, $P(0)K_0^2$ and $P(i\pi)K_0^3$ are nonsingular. We choose $K_0^1=P(-i\pi)\inv$, $K_0^2=P(0)\inv$, and $K_0^3=P(i\pi)\inv$. Finally, 
$\mc{G}_2=(\mc{G}_2^k)_{k=1}^3$ where $\mc{G}_2^k=-I_{2\times 2}$ for $k\in \List{3}$.
We have from Theorem~\ref{thm:SimpleContrMain} that for small values of $\eps>0$ the controller achieves asymptotic tracking of the reference signal $\yref(\cdot)$, and the control structure is robust with respect to perturbations in $(A,B,C,0)$ that preserve the property $\set{0,\pm i\pi}\subset \rho(\tilde{A}))$ and the exponential stability of the closed-loop system.
In particular, this
includes small bounded perturbations to the operators $A$, $B$, $C$, and $D=0$.

The robust controller also tolerates small perturbations and inaccuracies in the parameters $K$ and $G_2$ of the controller (although robustness with respect to these operators is not required in the statement of the robust output regulation problem). Because of this property, we can use approximations for the values $P(\pm i\pi)\inv$ and $P(0)\inv$ in $K_0$. In this example we use 
a truncated eigenfunction expansion of $A_0$ in approximating the matrices $P(0)$ and $P(\pm i\pi)$.
Finally, the parameter $\eps>0$ needs to be chosen in such a way that the closed-loop is stable.

The solution of the controlled heat equation can be approximated numerically using the truncated eigenfunction expansion of the operator $A_0$. 
For the simulation, the parameter $\eps$ is chosen to be $\eps=1/4$.  
Figure~\ref{fig:heatsimy1y2} depicts the simulated behaviour of the two outputs of the plant. The solution of the controlled partial differential equation at time $t=16$ is plotted in Figure~\ref{fig:heatsimsol}.

\begin{figure}[h!]
  \begin{minipage}[b]{0.47\linewidth}
    \begin{center} \includegraphics[width=\linewidth]{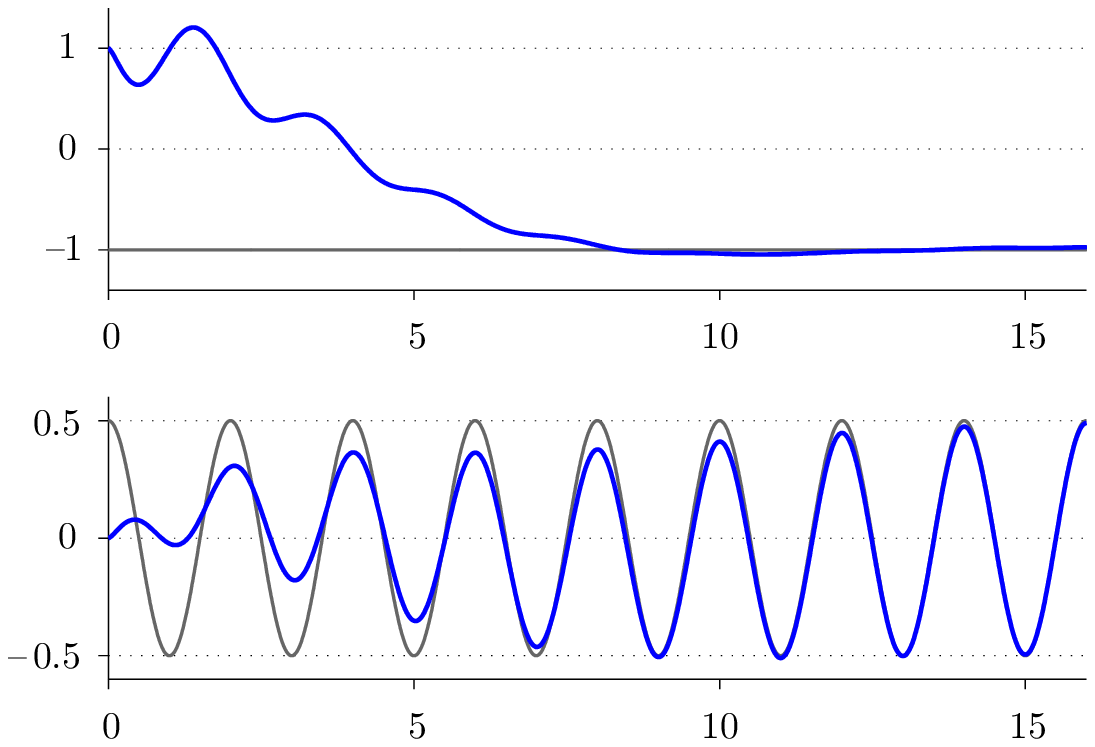}
    \end{center}
    \caption{Outputs $y_1(\cdot)$ and $y_2(\cdot)$ of the controlled system.}
    \label{fig:heatsimy1y2}
  \end{minipage}
\hfill
  \begin{minipage}[b]{0.47\linewidth}
  \begin{center}
\includegraphics[width=\linewidth]{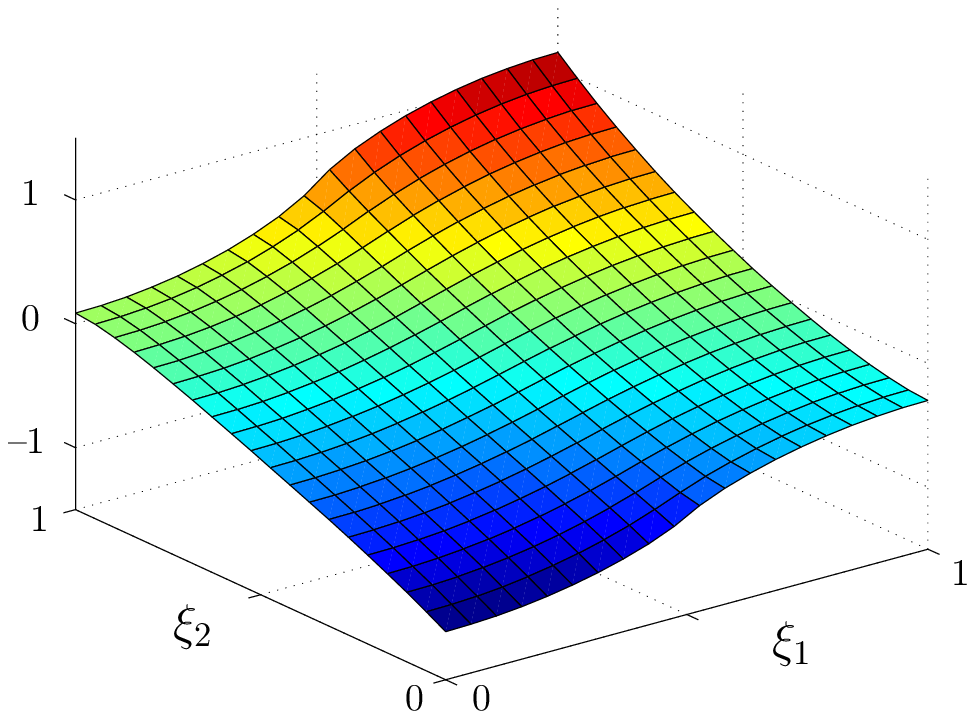}
  \end{center}
  \caption{State of the controlled system at time $t=16$.}
  \label{fig:heatsimsol}
\end{minipage}
\end{figure}

\appendix

\section{Appendix}

\begin{lemma}
  \label{lem:G1fbstab}
  Let $G_1 = \diag(i\gw_1 I_Y,\ldots,i\gw_q I_Y)\in \Lin(Y^q)$ and $G_2 = (G_2^k)_{k=1}^q \in \Lin(U,Y^q)$ where $U$ and $Y$ are Hilbert spaces. If the components $G_2^k$ of $G_2$ are boundedly invertible for all $k\in \List{q}$, then the semigroup generated by $G_1-G_2G_2^\ast$ is exponentially stable.
\end{lemma}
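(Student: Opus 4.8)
The plan is to reduce exponential stability to a spectral condition and then locate the spectrum. Since $M := G_1 - G_2 G_2^\ast \in \Lin(Y^q)$ is a \emph{bounded} operator, it generates the uniformly continuous semigroup $e^{tM}$, and the spectral mapping theorem $\gs(e^{tM}) = e^{t\gs(M)}$ holds; consequently the growth bound of the semigroup equals the spectral bound $s(M) = \sup\setm{\re\gl}{\gl\in\gs(M)}$. Hence it suffices to show $s(M)<0$, i.e. that $\gs(M)$ is a compact subset of the open left half-plane.

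First, I compute the symmetric part of $M$. Because each $i\gw_k$ is purely imaginary, $G_1$ is skew-adjoint, so $M^\ast = -G_1 - G_2G_2^\ast$ and $\tfrac12(M+M^\ast) = -G_2G_2^\ast \le 0$. This yields the dissipativity estimate $\re\iprod{Mx}{x} = -\norm{G_2^\ast x}^2 \le 0$ for all $x\in Y^q$, and therefore $\gs(M)$ lies in the closure of the numerical range, giving $\gs(M)\subset\setm{\gl}{\re\gl\le 0}$.

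The main work is to exclude spectrum on the imaginary axis. Suppose $i\gb\in\gs(M)$ for some $\gb\in\R$; since no point with positive real part belongs to $\gs(M)$, the point $i\gb$ lies on the boundary of $\gs(M)$ and is therefore an approximate eigenvalue. I pick $x_n$ with $\norm{x_n}=1$ and $(M-i\gb)x_n\to 0$. Pairing with $x_n$ and taking real parts, the skew term $i\gb\norm{x_n}^2$ drops out and the dissipativity identity forces $\norm{G_2^\ast x_n}\to 0$, hence $G_2 G_2^\ast x_n\to 0$ and thus $(G_1-i\gb)x_n\to 0$. Writing $x_n=(x_n^1,\ldots,x_n^q)$ componentwise, this gives $(i\gw_k-i\gb)x_n^k\to 0$, so $x_n^k\to 0$ whenever $\gw_k\ne\gb$. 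For the single index $k_0$ (if any) with $\gw_{k_0}=\gb$, I feed the information $G_2^\ast x_n=\sum_k (G_2^k)^\ast x_n^k\to 0$ back in: every term with $k\ne k_0$ already vanishes, so $(G_2^{k_0})^\ast x_n^{k_0}\to 0$, and the bounded invertibility of $G_2^{k_0}$ (hence of $(G_2^{k_0})^\ast$) forces $x_n^{k_0}\to 0$ as well. Thus $x_n\to 0$, contradicting $\norm{x_n}=1$, and so $\gs(M)\cap i\R=\emptyset$.

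Combining the two preceding steps, $\gs(M)$ is a compact set contained in $\setm{\gl}{\re\gl<0}$, so $s(M)<0$ and the semigroup generated by $M$ is exponentially stable. The crux, and the only place the hypotheses are genuinely used, is the imaginary-axis exclusion: the bounded invertibility of the components $G_2^k$ is exactly what upgrades the bare dissipativity, which alone permits neutral directions in $\ker(G_2^\ast)$, to strict spectral separation, and the finiteness of the frequency set $\set{i\gw_k}$ lets the componentwise argument terminate. I expect this step to be the delicate one, since $Y$ may be infinite-dimensional and one cannot argue through eigenvalues directly; the approximate-point-spectrum and numerical-range formulation is what makes the conclusion go through uniformly.
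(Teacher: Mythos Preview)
Your proof is correct and shares its core mechanism with the paper's: dissipativity confines the spectrum to the closed left half-plane, and the imaginary axis is then excluded by a sequence argument in which $\re\iprod{(M-i\gb)x_n}{x_n}\to 0$ forces $G_2^\ast x_n\to 0$, after which the invertibility of $G_2^{k_0}$ kills the surviving component. The packaging differs in two ways. First, the paper treats the non-frequency points $i\gw\notin\set{i\gw_k}$ separately, using the Woodbury formula together with the skew-adjointness of $G_2^\ast R(i\gw,G_1)G_2$; your approximate-point-spectrum argument covers these points automatically (there is simply no index $k_0$). Second, at a frequency $i\gw_n$ the paper derives the lower bound $\norm{(i\gw_n-M)z}\geq c\norm{z}$, which yields injectivity and closed range, and then appeals to the Mean Ergodic Theorem to obtain dense range; your observation that any imaginary spectral point lies on $\partial\gs(M)$ and is hence an approximate eigenvalue bypasses the range argument entirely. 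The paper's route is slightly more hands-on and exhibits the resolvent explicitly off the frequencies, while yours is more economical and avoids the extra ergodic-theory citation.
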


\begin{proof}
  Since $G_1-G_2G_2^\ast$ is a bounded operator, it is sufficient to show that $\gs(G_1-G_2G_2^\ast)\subset \C^-$.
  Since $G_1$ generates a contraction semigroup, the same is true for $G_1-G_2 G_2^\ast$, and thus $\gs(G_1-G_2G_2^\ast)\subset \overline{\C^-}$. 
It therefore remains to show that $i\R\subset \rho(G_1-G_2G_2^\ast)$.

Let $i\gw\in i\R$ be such that $\gw\neq \gw_k$ for all $k\in\List{q}$. We then have $i\gw\in \rho(G_1)$.
If $I+G_2^\ast R(i\gw,G_1)G_2 $ is boundedly invertible, then 
the Woodbury formula implies that 
  $i\gw\in\rho(G_1+ G_2 G_2^\ast)$.
However, this
is true since
$ G_2^\ast R(i\gw, G_1)G_2$ 
is bounded and skew-adjoint.

It remains to consider the case where $i\gw=i\gw_n$ for some 
$n\in\List{q}$. 
We will show 
 $\norm{(i\gw_n-G_1 +  G_2G_2^\ast )z }\geq c \norm{z}$ for some constant $c>0$ and for all $z\in Z$. 
If this is not true, there exists a sequence $(z_k)_{k\in\N}\subset Z$ such that $\norm{z_k}=1$ for all $k\in \N$ and $\norm{(i\gw_n-G_1 +  G_2G_2^\ast)z_k }\to 0$ as $k\to \infty$.
Since $i\gw_n-G_1$ is skew-adjoint, we have
\eq{
\MoveEqLeft \norm{(i\gw_n-G_1+G_2G_2^\ast )z_k}
\geq\abs{\iprod{(i\gw_n-G_1+G_2G_2^\ast )z_k}{z_k}}\\
&\geq \abs{\re\iprod{(i\gw_n-G_1+G_2G_2^\ast )z_k}{z_k})}
= \norm{G_2^\ast z_k}^2,
}
and thus $\norm{G_2^\ast z_k} \to 0$ as $k\to \infty$. 
For every $k\in\N$ denote
$z_k=z_k^1+z_k^2$ where $z_k^1\in \ran(i\gw_n-G_1)$, $z_k^2\in\ker(i\gw_n-G_1)$, and $1=\norm{z_k}^2=\norm{z_k^1}^2+\norm{z_k^2}^2$.
There exists $c_1>0$ such that
$\norm{(i\gw_n-G_1)z_k^1}\geq c_1 \norm{z_k^1}$ for all $k\in\N$.
Thus
\eq{
 c_1\norm{z_k^1}
 &\leq \norm{(i\gw_n-G_1)z_k^1} =
\norm{(i\gw_n-G_1)z_k}\\
&\leq \norm{(i\gw_n-G_1+G_2G_2^\ast)z_k}+ \norm{G_2}\norm{G_2^\ast z_k}
\to 0 
}
as $k\to\infty$.
Moreover, $\norm{G_2^\ast z_k^2}\geq \norm{(G_2^n)\inv}\inv \norm{z_k^2}$, and 
\eq{
\norm{(G_2^n)\inv}\inv \norm{z_k^2}\leq 
\norm{G_2^\ast z_k^2}\leq  \norm{G_2^\ast z_k}+\norm{G_2^\ast z_k^1}\to 0
}
as $k\to\infty$.
We have now shown that
 $z_k^1\to 0$ and $z_k^2\to 0$, but this contradicts the assumption that
 $ \norm{z_k}^2 =1$ for all $k\in\N$,
and thus
the original claim holds.
In particular $i\gw_n\notin \gs_p(G_1+G_2G_2^\ast)$ and the range of $i\gw_n-G_1+G_2G_2^\ast$ is closed. Finally, the Mean Ergodic Theorem~\cite[Sec. 4.3]{AreBat01book} implies that the range of $i\gw_n-G_1+G_2G_2^\ast$ is dense, and
$i\gw_n\in\rho(G_1+G_2G_2^\ast)$.
\end{proof}

\end{document}